\documentclass[11pt]{article}

\usepackage[T1]{fontenc}
\usepackage{lmodern}
\usepackage{microtype}
\usepackage[a4paper,margin=25mm,headheight=14pt]{geometry}
\usepackage{amsmath,amssymb,amsthm,mathtools}
\usepackage{enumitem}
\usepackage{booktabs}
\usepackage{xcolor}
\usepackage[hidelinks]{hyperref}
\usepackage{fancyhdr}

\definecolor{linkblue}{RGB}{28,73,122}
\hypersetup{
  colorlinks=true,
  linkcolor=linkblue,
  citecolor=linkblue,
  urlcolor=linkblue,
  pdftitle={A thickness boundary and modular obstructions for two-set radial projections},
  pdfauthor={Miwa Yuuki}
}

\setlist{nosep,leftmargin=1.6em}
\numberwithin{equation}{section}

\newtheorem{theorem}{Theorem}[section]
\newtheorem{proposition}[theorem]{Proposition}
\newtheorem{corollary}[theorem]{Corollary}
\newtheorem{lemma}[theorem]{Lemma}
\theoremstyle{remark}
\newtheorem{remark}[theorem]{Remark}

\newcommand{\R}{\mathbb R}
\newcommand{\T}{\mathbb T}
\newcommand{\Z}{\mathbb Z}
\newcommand{\Hd}{\dim_{\mathrm H}}
\newcommand{\Ldim}{\dim_{l^1}}
\newcommand{\conv}{\operatorname{conv}}

\begin{document}

\begin{center}
  {\LARGE\bfseries
  A thickness boundary and modular obstructions\\[0.12em]
  for two-set radial projections\par}
  \vspace{0.3em}
  {\large Miwa Yuuki\qquad\small 3 September 2026\par}
\end{center}
\vspace{0.2em}

\begin{abstract}
Let \(K_{a,m}\) and \(K_{b,n}\) be missing-digit Cantor sets with
initial consecutive digit sets, and write
\(c_a=m/(a-1)\) and \(c_b=n/(b-1)\).
We prove that if \(c_a+c_b\geq1\), then the radial projection of
\(K_{a,m}\times K_{b,n}\) from every observer has nonempty interior;
no multiplicative-independence assumption is needed for this implication.
Conversely, when the bases are multiplicatively independent and
\(c_a+c_b<1\), we exhibit explicit unbounded open sets of observers for
which the radial image is compact and nowhere dense.  Rational observers
with the same property are dense in each exterior corner region and
occur arbitrarily close to the four corners of the product.  Thus
\(c_a+c_b=1\) is the exact threshold for the all-observers interior
property within the multiplicatively independent initial-block family.
In particular, this supplies an explicit two-set counterexample to the
nonempty-interior conclusions of two conjectures of Yu.
We also establish a sufficient modular phase obstruction for affine
translates of such sets.  Combining it with a fixed-pin positive-measure
theorem of Banaji and Yu yields consecutive-block division sets that are
compact, perfect, of positive Lebesgue measure, and nowhere dense, with
both factor dimensions tending to one.  For the same family, Fourier
\(l^1\)-dimension estimates and the incidence argument used in Yu's
product theorem imply that the two self-products contain intervals for
all sufficiently large \(r\) and \(s\), while the cross-division set
remains nowhere dense.
\end{abstract}

\section{Introduction and main results}

For integers \(a\geq3\) and \(1\leq m\leq a-2\), put
\begin{equation}\label{eq:initial-set}
 K_{a,m}
 =
 \left\{\sum_{j=1}^{\infty}u_ja^{-j}:
 u_j\in\{0,\ldots,m\}\right\},
 \qquad
 c_a=\frac{m}{a-1}.
\end{equation}
Thus \(K_{a,m}\subset[0,c_a]\).  Define \(K_{b,n}\) and
\(c_b=n/(b-1)\) analogously.  For an observer \(O\in\R^2\), write
\[
 \Pi_O(z)=\frac{z-O}{|z-O|}\in S^1
\]
for radial projection, with \(O\) omitted from the domain if it belongs
to the set being projected.

Our first result identifies the all-observers interior threshold in this
family.

\begin{theorem}[Thickness boundary]\label{thm:boundary}
Let \(a,b\geq3\), \(1\leq m\leq a-2\), and
\(1\leq n\leq b-2\).
\begin{enumerate}[label=(\roman*)]
\item If \(c_a+c_b\geq1\), then
\[
 \Pi_O(K_{a,m}\times K_{b,n})
\]
has nonempty interior for every \(O\in\R^2\).  This implication does not
require multiplicative independence.
\item If \(\log a/\log b\notin\mathbb Q\) and \(c_a+c_b<1\), then the
all-observers conclusion fails.  In fact, an explicit unbounded open
set of observers sees compact nowhere-dense radial images.  Moreover,
rational observers with this property are dense in each of the four
components of
\[
 \bigl(\R\setminus[0,c_a]\bigr)
 \times
 \bigl(\R\setminus[0,c_b]\bigr).
\]
\end{enumerate}
Consequently, within the multiplicatively independent initial-block
family,
\[
 c_a+c_b\geq1
 \quad\Longleftrightarrow\quad
 \text{every observer sees radial interior}.
\]
\end{theorem}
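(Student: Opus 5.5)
The plan for (i) is to reduce the radial statement to an intersection problem for two affine Cantor sets, and then apply Newhouse's gap lemma. For (ii), the plan is a monotone gap-propagation argument for observers in the exterior corner regions.

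\emph{Thickness computation.} The first-level intervals of \(K_{a,m}\) are \([u/a,\,u/a+c_a/a]\) for \(u=0,\ldots,m\). Since \(c_a/a<1/a\), consecutive intervals are separated by gaps of length \((1-c_a)/a\). The construction is self-similar, so every bounded gap at level \(k\) has length \((1-c_a)a^{-k}\), and both neighbouring bridges have length at least \(c_aa^{-k}\). Hence the Newhouse thickness is \(\tau(K_{a,m})=c_a/(1-c_a)\), and similarly \(\tau(K_{b,n})=c_b/(1-c_b)\). The Newhouse condition \(\tau_a\tau_b\ge1\) is then equivalent to \(c_ac_b\ge(1-c_a)(1-c_b)\), that is, to \(c_a+c_b\ge1\). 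Thickness is preserved under affine maps and under passing to cylinders \(A'=K_{a,m}\cap I\) and \(B'=K_{b,n}\cap J\), where \(I,J\) are construction intervals.

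\emph{Part (i).} Given \(O\), choose deep cylinders \(A'\times B'\) whose closed square avoids \(O\) and lies off the vertical and horizontal lines through \(O\). After possibly swapping coordinates, the lines through \(O\) meeting this square are graphs \(y=O_2+t(x-O_1)\), with \(t\) ranging over a compact interval \(T\). Such a line meets \(A'\times B'\) exactly when
\[
A'\cap \phi_t^{-1}(B')\neq\emptyset,\qquad \phi_t(x)=O_2+t(x-O_1).
\]
As \(t\) runs across \(T\), the interval \(\phi_t^{-1}(\conv B')\) moves continuously and monotonically across \(\conv A'\). I will pick a parameter \(t_0\) at which the two hulls are \emph{linked*}: each contains exactly one endpoint of the other. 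Linking is an open condition in \(t\). For linked hulls, neither set lies in a gap of the other, since a gap of a Cantor set cannot contain an endpoint of its own hull. By the gap lemma the intersection is nonempty for every \(t\) near \(t_0\), so the radial image contains an arc. No arithmetic relation between \(a\) and \(b\) is used, which confirms the remark that multiplicative independence is not needed. Observers lying on the product cause no difficulty, because we localize away from \(O\).

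\emph{Part (ii).} Fix \(O\) in a corner region, say \(O_1<0\) and \(O_2<0\), so that the slope map \(g(x,y)=(y-O_2)/(x-O_1)\) is increasing in \(y\) and decreasing in \(x\). The image is \(g(K_{a,m}\times K_{b,n})\), a finite union over level-\((k,l)\) cylinders of images of rescaled copies. After linearizing \(g\) on a small cylinder, each such image is approximately \(\alpha K_{a,m}+\beta K_{b,n}\), and the ratio \(\beta/\alpha\) is \(a^{k}b^{-l}\) times a smooth factor depending on \(O\).

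The key elementary step is the following. When \(c_a+c_b<1\), a sum \(K_{a,m}+\lambda K_{b,n}\) with \(\lambda\) in a suitable window has a definite gap, because the first-level gaps of size \((1-c_a)\) and \((1-c_b)\lambda\) cannot both be covered by bridges. I will make this explicit for \(\lambda\) near a specific value, for instance \(\lambda\approx1\), where the total hull lengths \(c_a+\lambda c_b<1\) leave a gap. Multiplicative independence makes \(\{a^kb^{-l}\}\) dense in \(\R_{>0}\). This lets me find, inside every subarc of the image, a cylinder pair whose local ratio falls in the window, producing a gap there.

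The \emph{main obstacle} is making this uniform. The nonlinearity of \(g\) and the overlap of images coming from different cylinder pairs could fill the gap produced by one pair. I would handle this by exploiting monotonicity: in a corner region, pieces from neighbouring cylinders are ordered, so a gap between extreme pieces is protected. I would first try choosing \(O\) far out along a rational direction, so that \(g\) is nearly linear on the whole product and the error is dominated by the gap size. Openness and unboundedness of the good observer set then follow by perturbation. Rational density in each of the four components would be obtained by running the same argument with observers approaching the corners, using the symmetries \(x\mapsto c_a-x\) and \(y\mapsto c_b-y\). Finally, combining (i) and (ii) gives the stated equivalence.
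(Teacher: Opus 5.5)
Part (i) is correct, and it takes a different route from the paper. Newhouse thickness $c_a/(1-c_a)$ is scale invariant, so applying the gap lemma at a slope where the line cuts off a corner of a deep cell (linked hulls) gives an open arc with no control of the relative scale $b^q/a^p$. The paper instead applies Astels' theorem to get the whole difference $K_{b,n}-\rho K_{a,m}=[-\rho c_a,c_b]$. That forces $\rho$ into the window $W$, and so needs the scale-density lemma or, for dependent bases, the B\'ezout argument; your route makes the independence-free claim immediate. Quote the gap lemma in its non-strict form $\tau_1\tau_2\geq1$, since the boundary case $c_a+c_b=1$ is exactly $\tau_a\tau_b=1$.

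Part (ii), however, has a genuine gap: nothing in it shows that any slope is missing from the \emph{whole} image. That image is the union of the images of all cylinder pairs, and these overlap. A line through $O$ crosses many cells of a level-$(k,l)$ grid, and monotonicity of $g$ orders only cells in a common row or column. Already at level one, seen from $(-1,-1)$, the cells $[0,\tfrac16]\times[0,\tfrac1{20}]$ and $[\tfrac13,\tfrac12]\times[\tfrac15,\tfrac14]$ of $K_{3,1}\times K_{5,1}$ have slope hulls $[\tfrac67,\tfrac{21}{20}]$ and $[\tfrac45,\tfrac{15}{16}]$, which interleave. In the thin regime the image can even have positive measure (the paper's $K_{101,40}\times K_{103,40}$), so a gap in one piece $\alpha K_{a,m}+\beta K_{b,n}$ says nothing. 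Linearizing for a far observer protects only gaps longer than the linearization error, whereas nowhere density needs missing points in every subarc at every scale. Nowhere density is also not stable under perturbation. So neither openness of the bad-observer set nor the claim for observers near the corners (where nothing is nearly linear) follows.

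What is missing is an obstruction that handles all pairs $(u,v)$ at once; the paper gets it from digit tails. If the slope $b^q/(a^p+e)$ were attained from $(-A,-B)$, then $(a^p+e)(A+u)=b^q(B+v)$. Reducing mod $1$ with $\theta=\{a^pu\}\in K_{a,m}$ and $\zeta=\{b^qv\}\in K_{b,n}$ gives
\[
 \zeta\equiv(a^pA-b^qB)+\theta+e(A+u)\pmod 1 .
\]
One then forces the right-hand side into a lift of the principal gap $(c_b,1)$, contradicting $\zeta\in[0,c_b]$.
\begin{itemize}
\item For the rational observers $A=N/a^P$, $B=(M+1-c_b)/b^Q$, the residue $a^pA-b^qB$ is eventually $c_b-1$. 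Any $0<e<\gamma/(A+c_a)$, with $\gamma=1-c_a-c_b$, then works however close $(-A,-B)$ is to the corner. The reflections $u\mapsto c_a-u$, $v\mapsto c_b-v$ give the other three corner regions.
\item For $A>A_*$, the residues that some $|e|<\gamma/c_a$ can push into the gap form an interval of length $\gamma(1+2A/c_a)>1$. So every residue is absorbed, for every such observer, and this is what makes the bad set open and unbounded.
\item Finally, $\log_b(a^p+e)\bmod 1$ is a vanishing perturbation of an irrational rotation, so the forbidden slopes are dense in every subarc.
\end{itemize}
Multiplicative independence enters through this rotation, not through density of local scale ratios as in your plan.
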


The positive direction is a local application of Astels' interval
criterion on appropriately chosen corner cylinders.  The negative
direction comes from a modular digit-tail obstruction.  A first form of
that obstruction applies to affine translates
\[
 E=\alpha+K_{a,m},
 \qquad
 F=\beta+K_{b,n},
\]
provided the two anchor residues stabilize under multiplication by
powers of their bases.  It produces explicit division sets \(F/E\)
with empty interior.

The simplest initial-block consequence is an explicit two-set
counterexample to the nonempty-interior conclusions proposed in
\cite[Conjectures~2.1 and 4.2]{YuRadial}.
In both conjectures this conclusion is quantified over every observer.

\begin{corollary}\label{cor:small-counterexample}
The radial image
\[
 \Pi_{(-1,-1)}(K_{3,1}\times K_{5,1})
\]
is compact and nowhere dense, although
\[
 \Hd K_{3,1}+\Hd K_{5,1}
 =
 \frac{\log2}{\log3}+\frac{\log2}{\log5}>1
\]
and the two contraction ratios are multiplicatively independent.
\end{corollary}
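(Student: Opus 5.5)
Since $(-1,-1)$ lies strictly below and to the left of $K_{3,1}\times K_{5,1}\subset[0,\tfrac12]\times[0,\tfrac14]$, I would first reduce the radial image to a division set. The map $z\mapsto\Pi_{(-1,-1)}(z)$ is continuous on the compact product, so the image is compact. Moreover, every direction it contains lies in the open first quadrant of $S^1$. On that arc the slope map $\theta\mapsto\tan\theta$ is a homeomorphism onto $(0,\infty)$, and it sends $\Pi_{(-1,-1)}(x,y)$ to $(1+y)/(1+x)$. Hence the corollary is equivalent to the statement that
\[
 D=\frac{1+K_{5,1}}{1+K_{3,1}}=\Bigl\{\frac{1+y}{1+x}: x\in K_{3,1},\ y\in K_{5,1}\Bigr\}\subset(0,\infty)
\]
has empty interior. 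This is exactly the affine-translate situation $E=1+K_{3,1}$, $F=1+K_{5,1}$ described after Theorem~\ref{thm:boundary}. The anchors $\alpha=\beta=1$ are integers, so their residues modulo $1$ are trivially invariant under multiplication by powers of $3$ and $5$; the stabilization hypothesis therefore holds. The dimension inequality and the multiplicative independence of $3$ and $5$ are routine checks: $\log2/\log3\approx0.631$, $\log2/\log5\approx0.431$, and $\log3/\log5\notin\mathbb Q$ by unique factorization.

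Because $D$ is closed, it suffices to exhibit a dense set of ratios $t\notin D$; each such $t$ then automatically has a whole neighbourhood missing $D$. My candidates are $t=5^{l}/3^{k}$ (possibly multiplied by a fixed rational correction factor). These are dense in $(0,\infty)$ precisely because $\log5/\log3$ is irrational, which is where multiplicative independence enters. The claim is that $t\in D$ forces an identity
\[
 3^{k}y-5^{l}x=5^{l}-3^{k},\qquad x\in K_{3,1},\ y\in K_{5,1}.
\]

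To reach a contradiction, I would expand $x=\sum u_j3^{-j}$ and $y=\sum v_j5^{-j}$ with $u_j,v_j\in\{0,1\}$. Then I would multiply the identity by suitable powers $3^{M}$ or $5^{L}$ and reduce modulo $1$, using digit tails. Shifting $x$ by $3^{M}$ lands its fractional part in $K_{3,1}\subset[0,\tfrac12]$, and shifting $y$ by $5^{L}$ lands its fractional part in $K_{5,1}\subset[0,\tfrac14]$. Comparing residues of both sides should confine a certain fractional part simultaneously to $[0,\tfrac12]+[0,\tfrac14]$ and to a translate of it that is forced, via the integer anchors, to be disjoint modulo $1$. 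This is where $c_a+c_b=\tfrac34<1$ should be used: the two tail sets together cover only a proper arc of $\T$, leaving a gap of length $\tfrac14$.

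The main obstacle is the mixed-base coupling. Multiplying by $3^{M}$ controls the tail of $x$ but scrambles $y$, and multiplying by $5^{L}$ does the reverse. The first thing I would try is to tune the correction factor in $t$, or to restrict $(k,l)$ to a suitable congruence class, so that, after multiplying by $3^{k}5^{l}$ and taking the right modulus, both tails appear with coefficients $\equiv\pm1$. The residue of $5^{l}-3^{k}$ would then land in the uncovered arc. If this direct choice fails, I would fall back on the general modular phase obstruction for affine translates, applied with $\alpha=\beta=1$; that obstruction is designed for exactly this situation.
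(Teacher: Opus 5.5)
Your reduction to the closed quotient $D=(1+K_{5,1})/(1+K_{3,1})$ is fine, and so are the compactness, the remark that a closed set with dense complement has empty interior, and the dimension and independence checks. The gap is in the choice of forbidden ratios.

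With $t=5^l/3^k$ the identity $3^k(1+y)=5^l(1+x)$ multiplies the base-$5$ set by a power of $3$ and the base-$3$ set by a power of $5$. So Lemma~\ref{lem:tails} cannot be applied at all; the ``mixed-base coupling'' you identify is created by this inversion.

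Correcting the pairing does not rescue the plan. For $t=c\,3^k/5^l$ with a fixed rational $c=P/Q$ you get $Q5^l(1+y)=P3^k(1+x)$. The integer anchors vanish modulo $1$, and what remains is $Q\{5^ly\}\equiv P\{3^kx\}\pmod 1$. Two zero tails satisfy this, so there is no contradiction at the hull level. Integer anchors therefore cannot force any translate, and no congruence restriction on $(k,l)$ changes this. Your proposed mechanism is also impossible as phrased: the arc $[0,\tfrac34]$ meets every one of its translates in $\T$.

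The missing idea is an \emph{additive} perturbation of one power. Test the ratios $(3^p+e)/5^q$ instead. If one lies in $D$, then $(3^p+e)(1+x)=5^q(1+y)$, and taking fractional parts gives $\{5^qy\}\equiv\{3^px\}+e(1+x)\pmod 1$. For $e\in(\tfrac14,\tfrac13)$ the right side has a representative in $[e,\tfrac12+\tfrac32e]\subset(\tfrac14,1)$, which is the principal gap of $K_{5,1}$. But $\{5^qy\}\in[0,\tfrac14]$, a contradiction. Density of these candidates follows from Lemma~\ref{lem:phase-density}, since $\log_5(3^p+e)=p\log_53+o(1)$.

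This is Theorem~\ref{thm:affine} with $\delta=0$ and $k=0$. Your fallback names that theorem, but exhibiting a valid $e$ is the substantive step, and it is not automatic. For every $e<0$ the hull $[\tfrac32e,\tfrac12+e]$ either contains $0$ or has length at least $\tfrac34$. So in this orientation Corollary~\ref{cor:common-anchor} at $t=1$ is exactly borderline ($t\gamma=c_a^2=\tfrac14$). That is why the paper interchanges coordinates, applies the common-anchor window $-\tfrac25<e<-\tfrac14$ to $(1+K_{3,1})/(1+K_{5,1})$, and then inverts.
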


Indeed, both defining iterated function systems satisfy the strong
separation condition, hence the open set condition; their uniform
contraction ratios \(1/3\) and \(1/5\) are multiplicatively independent;
and the displayed Hausdorff dimension is greater than one.  Thus
Corollary~\ref{cor:small-counterexample} disproves
\cite[Conjecture~2.1]{YuRadial} and the nonempty-interior assertion of
\cite[Conjecture~4.2]{YuRadial}, as stated.  No conclusion about the
positive-measure assertion of Conjecture~4.2 is intended.

The obstruction is not confined to a single arithmetic observer.  If
\(\gamma=1-c_a-c_b>0\), define
\begin{equation}\label{eq:arm-thresholds-intro}
 A_*=\frac{c_a(c_a+c_b)}{2\gamma},
 \qquad
 B_*=\frac{c_b(c_a+c_b)}{2\gamma}.
\end{equation}
When the bases are multiplicatively independent, every observer in
\[
 \{(x,y):\operatorname{dist}(x,[0,c_a])>A_*,\ y\notin[0,c_b]\}
 \ \cup\
 \{(x,y):x\notin[0,c_a],\ \operatorname{dist}(y,[0,c_b])>B_*\}
\]
sees a compact nowhere-dense image.
In fact, rational bad observers are dense in all four exterior corner
regions.  Unlike the open arms, this arithmetic family approaches the
product arbitrarily closely at each of its four corners.  For
\(K_{3,1}\times K_{5,1}\), the open Euclidean ball of radius \(5/8\)
centred at \((-1,-1)\) consists entirely of bad observers.

Affine anchors also give a family in which nowhere denseness coexists
with positive measure.  For \(r,s\geq1\), set
\[
 A_r=25^r,\qquad B_s=49^s,\qquad
 q_r=\frac{A_r-1}{24},\qquad t_s=\frac{B_s-1}{24},
\]
and
\begin{equation}\label{eq:contrast-family-intro}
 E_r=K_{A_r,\{13q_r,\ldots,24q_r\}},
 \qquad
 F_s=K_{B_s,\{t_s,\ldots,12t_s\}}.
\end{equation}

\begin{theorem}[Product--division contrast]\label{thm:contrast-intro}
For every \(r,s\geq1\), the division set \(F_s/E_r\) is compact,
perfect, nowhere dense, and of positive Lebesgue measure.  Moreover,
\[
 \Hd E_r\longrightarrow1,\qquad \Hd F_s\longrightarrow1.
\]
For all sufficiently large \(r,s\), each self-product
\[
 E_rE_r\qquad\text{and}\qquad F_sF_s
\]
contains a nondegenerate interval.  The explicit estimate below already
shows that \(E_6E_6\) and \(F_5F_5\) contain nondegenerate intervals.
No assertion is made about the cross-product \(E_rF_s\).
\end{theorem}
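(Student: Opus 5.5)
We work with \(E_r\) in base \(A=25^r\) with digit block \(D_E=\{13q,\ldots,24q\}\), where \(q=q_r\). Since \(24q=A-1\), we have
\[
 E_r=\tfrac{13q}{A-1}+q\,K_{A,11}'
 =\tfrac{13}{24}+\tfrac{1}{24}K'',
\]
where \(K''\) is the initial-block set with digits \(\{0,\ldots,11\}\) scaled by \(q\). Concretely, \(E_r\) is an affine copy \(\alpha+\lambda K\) of a set whose digits are multiples of \(q\) in \(\{0,\ldots,11q\}\). Hence \(E_r\subset[13/24,1]\) and, analogously, \(F_s\subset[1/24,1/2]\). The plan has four parts.

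\emph{Positive measure and basic properties.} Compactness follows because \(E_r\) is compact and avoids \(0\). Perfectness follows because \(E_r,F_s\) are perfect and \((x,y)\mapsto y/x\) is continuous and open along fibres. For positive measure I would invoke the fixed-pin positive-measure theorem of Banaji and Yu (quoted as the source in the abstract). The division set \(F/E\) is the radial projection of \(E\times F\) from the origin, i.e. slopes. The hypotheses are \(\Hd E_r+\Hd F_s>1\) and the bases \(25^r,49^s\) being multiplicatively independent, with the pin at the origin outside the product. The dimension condition holds, since \(\Hd E_r=\log12/\log 25^r\)? That is not near one, so the sets must be re-read. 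The digit sets \(\{13q,\ldots,24q\}\) have 12 elements spaced by \(q\), so as a self-similar set in base \(A\) the dimension is \(\log 12/(r\log 25)\to0\). The intended reading must therefore be that the digit block is all integers from \(13q\) to \(24q\), i.e. \(11q+1\) digits. Then \(\Hd E_r=\log(11q_r+1)/\log A_r\to1\), and similarly for \(F_s\). I adopt this reading. Then the dimension sum exceeds one and Banaji--Yu applies.

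\emph{Nowhere denseness.} I would apply the modular phase obstruction for affine translates stated after Theorem~\ref{thm:boundary}. Write \(E_r=\tfrac{13}{24}+[0,\tfrac{11}{24}]\)-type cylinder set. The anchor residues \(13/24\) and \(1/24\) have denominator \(24\), and \(25^r\equiv1\), \(49^s\equiv1 \pmod{24}\). So the anchors are fixed under multiplication by the base powers, which is exactly the stabilization hypothesis. The obstruction then gives that, at every scale, a ratio \(y/x\) lying in \(F/E\) forces digit-tail residues mod \(24\) into a proper subset. I would exhibit, inside any interval of slopes, a subinterval of slopes excluded by a fixed residue window. This is the main obstacle: checking that the two blocks \([13/24,1]\) and \([1/24,1/2]\) leave a definite modular gap. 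The constants \(13,24\) and \(1,12\) look chosen so that the gap \(\tfrac{1}{24}\) between the blocks survives the products.

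\emph{Self-products.} For the self-products I would use Fourier \(l^1\)-dimension. Each \(E_r\) carries a natural measure whose Fourier transform is a product of Dirichlet-kernel factors. The \(l^1\)-dimension tends to one as the block fraction \(11/24\) is fixed while \(A\to\infty\), and the estimate would be made explicit. Yu's product theorem then gives intervals in \(EE\) once \(2\Ldim>\) the required threshold, and numerically \(r=6\) and \(s=5\) suffice.
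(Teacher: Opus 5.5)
Your outline follows the paper's route: elementary compactness and perfectness, Banaji--Yu at the origin for measure, the affine-anchor obstruction for nowhere density, and Fourier $l^1$-dimension with Yu's incidence argument for the self-products. Your corrected reading of the digits as the consecutive block $13q_r,\ldots,24q_r$ is the intended one; it gives $E_r=\tfrac{13}{24}+K_{A_r,11q_r}$ and $F_s=\tfrac1{24}+K_{B_s,11t_s}$ directly from \eqref{eq:affine-block}.

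The genuine gap is the positive-measure step. The hypotheses you attribute to Banaji--Yu, namely $\Hd E_r+\Hd F_s>1$ plus multiplicative independence, are the hypotheses of Yu's conjectures from the introduction. Positive measure under only those hypotheses is not a known theorem, and the paper explicitly draws no conclusion about it. The result the paper actually invokes requires \emph{each} factor to have dimension above $\vartheta=(\sqrt{65}-5)/4\approx0.7656$. You also only show $\Hd E_r,\Hd F_s\to1$, which at best handles large $r,s$, while the claim is for every $r,s\ge1$. What is missing is a uniform bound. With $L_N=(11N+13)/24$ digits one has $L_{xy}-L_xL_y=143(x-1)(y-1)/576\ge0$, so $L_{25^r}\ge12^r$ and $L_{49^s}\ge23^s$. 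Hence $\Hd E_r\ge\log12/\log25\approx0.772$ and $\Hd F_s\ge\log23/\log49\approx0.806$, both above $\vartheta$. Positive angular measure then transfers to $|F_s/E_r|>0$, because angle and slope are bi-Lipschitz on the compact arc determined by $E_r\subset[13/24,1]$ and $F_s\subset[1/24,1/2]$.

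The nowhere-density step you leave open as ``the main obstacle'' is immediate and involves no products: take $e=0$ in Theorem~\ref{thm:affine}. Your residue check gives $\delta=\tfrac{13}{24}-\tfrac1{24}=\tfrac12$, so $[L_0,U_0]=[\tfrac12,\tfrac{23}{24}]\subset(\tfrac{11}{24},1)=(c_b,1)$ with $k=0$. You also do not need to exhibit excluded subintervals. The obstruction excludes every slope $A_r^p/B_s^q$ with $p$ large; this set is dense because $25^r$ and $49^s$ are multiplicatively independent, and a compact set with dense complement is nowhere dense.

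Finally, the self-product part is only a sketch, and the claim that $r=6$, $s=5$ suffice is asserted, not derived. You need the explicit criterion $\Ldim\mu>3/4$. It comes from $\Ldim(\mu\otimes\mu)\ge2\Ldim\mu$ and the hyperbola threshold $2-\tfrac12=\tfrac32$ in Yu's incidence theorem, and it is extracted from the proof of Yu's Theorem~C, not its literal statement. Combine it with the bound $\Ldim\mu_{N,D}\ge(\log|D|-\log(2\log N))/\log N$, which evaluates to about $0.7704$ at $N=25^6$ and $0.7717$ at $N=49^5$.
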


These statements complement the explicit arithmetic of the classical
Cantor quotient in \cite{ART}, the thickness criteria of
\cite{Astels}, and the positive-measure results of \cite{BanajiYu}.
The point here is not that every choice of anchors for a thin pair has a
nowhere-dense quotient: the modular criteria below are sufficient
conditions and do not characterize all anchors.

\section{Preliminaries}\label{sec:prelim}

\subsection{Digit blocks, tails, and thickness}

For a digit set \(D\subset\{0,\ldots,a-1\}\), write
\[
 K_{a,D}
 =
 \left\{\sum_{j=1}^{\infty}d_ja^{-j}:d_j\in D\right\}.
\]
If \(D=\{d,\ldots,d+m\}\) is consecutive, then
\begin{equation}\label{eq:affine-block}
 K_{a,D}
 =
 \frac{d}{a-1}+K_{a,m}.
\end{equation}
For the initial block in \eqref{eq:initial-set},
\begin{equation}\label{eq:geometry}
 \conv(K_{a,m})=[0,c_a],
 \qquad
 K_{a,m}=c_a-K_{a,m},
 \qquad
 g_a=\frac{1-c_a}{a},
\end{equation}
where \(g_a\) is the largest bounded gap.  Its normalized thickness is
\begin{equation}\label{eq:normalized-thickness}
 S(K_{a,m})=c_a;
\end{equation}
see \cite[Lemma~3.6]{YuRadial}.  Astels' theorem includes the equality
case \(S(K_1)+S(K_2)=1\), subject to the strict gap-size conditions used
below \cite[Theorems~2.2 and 2.4]{Astels}.

Multiplication by a base power shifts the digit expansion:
\begin{lemma}[Digit tails]\label{lem:tails}
If \(u\in K_{a,m}\) and \(p\geq1\), then
\[
 \{a^pu\}\in K_{a,m},
\]
where \(\{\cdot\}\) denotes fractional part.
\end{lemma}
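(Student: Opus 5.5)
The plan is to compute \(a^pu\) directly from the digit expansion of \(u\) and to split it into an integer part and a tail.

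\textbf{Step 1: split the expansion.} Write \(u=\sum_{j\geq1}u_ja^{-j}\) with every \(u_j\in\{0,\ldots,m\}\). Multiplying by \(a^p\) gives
\[
 a^pu=\sum_{j=1}^{p}u_ja^{p-j}+\sum_{j=1}^{\infty}u_{j+p}a^{-j}=:N+v .
\]
Here \(N\) is a nonnegative integer. The tail \(v\) has digits \(u_{j+p}\in\{0,\ldots,m\}\), so \(v\in K_{a,m}\) by \eqref{eq:initial-set}.

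\textbf{Step 2: identify the fractional part.} By \eqref{eq:geometry}, \(v\in[0,c_a]\). Since \(m\leq a-2\), we have \(c_a=m/(a-1)<1\), so \(0\leq v<1\). Hence \(\lfloor a^pu\rfloor=N\) and \(\{a^pu\}=v\in K_{a,m}\).

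\textbf{Where care is needed.} The one point to watch is that \(v<1\), rather than merely \(v\leq1\). This is exactly where the hypothesis \(m\leq a-2\) is used: if \(m=a-1\), the tail could equal \(1\), and its fractional part would be \(0\) rather than \(v\). Ambiguity of the digit expansion causes no problem, because any representation of \(u\) with digits in \(\{0,\ldots,m\}\) can be used and the argument above applies to it.
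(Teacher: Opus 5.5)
Your proof is correct and is essentially the paper's argument: the first \(p\) digits give the integer part and the tail gives the fractional part. You also make explicit the step the paper leaves implicit, namely that the tail is strictly less than \(1\) because \(c_a=m/(a-1)<1\) when \(m\leq a-2\).
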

\begin{proof}
The integer part is determined by the first \(p\) digits and the
fractional part is the remaining tail.
\end{proof}

The anchor in \eqref{eq:affine-block} has a fixed residue:
\begin{equation}\label{eq:anchor-residue}
 a^p\frac{d}{a-1}-\frac{d}{a-1}
 =
 d(1+a+\cdots+a^{p-1})\in\Z.
\end{equation}

\subsection{Phase density and slope coordinates}

\begin{lemma}[Vanishing perturbations of an irrational rotation]
\label{lem:phase-density}
If \(\xi\notin\mathbb Q\) and \(\varepsilon_p\to0\), then every tail of
\[
 p\xi+\varepsilon_p\pmod1
\]
is dense in \(\T=\R/\Z\).
\end{lemma}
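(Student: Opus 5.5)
The plan is to reduce the lemma to Weyl's theorem for the irrational rotation and then absorb the perturbation by a uniform-continuity argument on \(\T\). Fix a tail index \(P\), a point \(\theta\in\T\), and \(\eta>0\). I will produce \(p\geq P\) with \(\|p\xi+\varepsilon_p-\theta\|_{\T}<\eta\), where \(\|\cdot\|_{\T}\) denotes distance to the nearest integer.

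First, since \(\varepsilon_p\to0\), choose \(P_1\geq P\) so that \(|\varepsilon_p|<\eta/2\) for all \(p\geq P_1\). Next, since \(\xi\notin\mathbb Q\), the unperturbed sequence \((p\xi \bmod 1)_{p\geq P_1}\) is dense in \(\T\). This follows from Kronecker's theorem applied to the shifted orbit \(P_1\xi+n\xi\), \(n\geq0\). A shifted orbit of an irrational rotation is the image of the full forward orbit under the isometry \(x\mapsto x+P_1\xi\), so it is still dense. Equivalently, one can argue directly: the forward orbit \(\{n\xi\}_{n\ge0}\) is dense, so every tail is dense, because removing finitely many points from a dense subset of \(\T\) with no isolated points leaves a dense set. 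The points \(n\xi\) are pairwise distinct modulo 1 precisely because \(\xi\) is irrational.

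Density of the tail gives \(p\geq P_1\) with \(\|p\xi-\theta\|_{\T}<\eta/2\). The triangle inequality then yields
\[
 \|p\xi+\varepsilon_p-\theta\|_{\T}\leq \|p\xi-\theta\|_{\T}+|\varepsilon_p|<\eta.
\]
Since \(P\), \(\theta\) and \(\eta\) were arbitrary, every tail is dense.

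There is no real obstacle. The only point needing care is that density must hold for the tail \(p\geq P_1\) and not merely for the full sequence. The cleanest justification is the finite-removal remark above: each point of \(\T\) is a limit of infinitely many distinct orbit points, so discarding finitely many does not destroy density. If one prefers a self-contained argument, the pigeonhole version of Kronecker's theorem also works. It produces \(k\geq1\) with \(0<\|k\xi\|_{\T}<\eta/2\), and the multiples \(p=P_1+jk\), \(j\ge0\), then step around \(\T\) with step size less than \(\eta/2\). Some such multiple therefore lands within \(\eta/2\) of \(\theta\).
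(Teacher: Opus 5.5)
Your proposal is correct and follows essentially the same route as the paper. Both use density of every tail of the irrational rotation and absorb the vanishing perturbation $\varepsilon_p$; the paper phrases this with an open arc $J$ whose closure lies in $I$, while you use the equivalent $\eta/2$ triangle-inequality estimate. Your extra justification that tails of the unperturbed orbit remain dense, which the paper takes for granted, is also sound.
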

\begin{proof}
Given an open arc \(I\subset\T\), choose an open arc \(J\) with
\(\overline J\subset I\).  The irrational rotation visits \(J\) at
arbitrarily large indices, and every sufficiently late such visit
remains in \(I\) after adding \(\varepsilon_p\).
\end{proof}

\begin{lemma}[Scale density]\label{lem:scale-density}
Let \(a,b>1\) and suppose that \(\log a/\log b\notin\mathbb Q\).  Then,
for every \(N\in\mathbb N\),
\[
 \left\{\frac{b^q}{a^p}:
 p,q\in\mathbb Z,\ p,q\geq N\right\}
\]
is dense in \(\mathbb R_{>0}\).
\end{lemma}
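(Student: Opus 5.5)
Taking logarithms turns the statement into a density claim on the real line. Put \(\theta=\log b/\log a\); the hypothesis \(\log a/\log b\notin\mathbb Q\) is the same as \(\theta\notin\mathbb Q\). Then
\[
 \log_a\frac{b^q}{a^p}=q\theta-p ,
\]
and since \(t\mapsto a^t\) is a homeomorphism \(\R\to\R_{>0}\), it suffices to show that
\[
 \{q\theta-p:\ p,q\in\Z,\ p,q\geq N\}
\]
is dense in \(\R\) for every \(N\).

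First I fix a target \(t\in\R\) and \(\varepsilon>0\). I write \(t=k+\tau\) with \(k\in\Z\) and \(\tau\in[0,1)\). I apply Lemma~\ref{lem:phase-density} with \(\xi=\theta\) and \(\varepsilon_p\equiv0\); the lemma's index is called \(q\) here. The tail \(\{q\theta \bmod 1: q\geq N'\}\) is dense in \(\T\) for any \(N'\). So I can choose \(q\geq N'\) with \(\{q\theta\}\) within \(\varepsilon\) of \(\tau\) in \(\T\). The wrap-around case \(\tau\) near \(0\) or \(1\) only changes the integer adjustment below by one.

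Next I set \(p=\lfloor q\theta\rfloor-k\), with the integer adjusted by one in the wrap-around case. This gives \(|q\theta-p-t|<\varepsilon\). It remains to check \(p\geq N\) and \(q\geq N\). Because \(a,b>1\), we have \(\theta>0\), so \(\lfloor q\theta\rfloor\to\infty\) as \(q\to\infty\). I therefore choose \(N'\) so large that \(N'\geq N\) and \(\lfloor N'\theta\rfloor-|k|-1\geq N\). Every admissible \(q\geq N'\) then satisfies both \(q\geq N\) and \(p\geq N\).

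The main point needing care is this simultaneous lower bound on \(p\) and \(q\), and it is exactly what forces the use of a tail of the rotation rather than the whole orbit. The bound holds because \(p\approx q\theta-t\) grows linearly in \(q\) with the positive slope \(\theta\). Exponentiating then shows that \(b^q/a^p\) comes within a factor \(a^{\pm\varepsilon}\) of \(a^t\). Hence every \(x\in\R_{>0}\) is a limit of elements of the set.
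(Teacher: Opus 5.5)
Your proof is correct and is essentially the paper's argument: you pass to logarithms, use density of a tail of an irrational rotation to control one exponent, and take the other exponent to be the appropriate integer, whose size is forced by linear growth. The only difference is cosmetic. You rotate by \(\log_a b\) in the index \(q\) and solve for \(p\), whereas the paper rotates by \(\log_b a\) in the index \(p\), with the rotation shifted by the target \(t\), and solves for \(q\).
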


\begin{proof}
Put \(\theta=\log_b a\), which is irrational.  Fix \(t\in\mathbb R\)
and \(\varepsilon>0\).  Every tail of the irrational rotation
\(p\theta+t\pmod1\) is dense, so there are arbitrarily large \(p\) such
that
\[
 \operatorname{dist}(p\theta+t,\mathbb Z)<\varepsilon.
\]
Choose \(q\in\mathbb Z\) with
\(|q-(p\theta+t)|<\varepsilon\).  Taking \(p\) sufficiently large
ensures \(p,q\geq N\), and then
\[
 \left|\log_b\!\left(\frac{b^q}{a^p}\right)-t\right|
 =|q-p\theta-t|<\varepsilon.
\]
Thus the logarithms of the displayed scales are dense in \(\mathbb R\).
Exponentiation by base \(b\) proves the claim.
\end{proof}

If an observer is southwest of a compact subset of the positive
quadrant, all relevant directions lie in a compact positive arc of
\(S^1\).  On this arc the angle, slope \(y/x\), and logarithmic slope
\(\log_b(y/x)\) are smooth coordinates.  They are local
homeomorphisms for topological statements and bi-Lipschitz on compact
subarcs for Lebesgue-measure statements.  For an arbitrary observer we
will use the ordinary slope only on a small corner rectangle whose first
coordinate stays away from the observer.  This distinction is important:
if an observer lies in the product, removing it can destroy compactness,
so no compactness assertion is made in the thick-side part of
Theorem~\ref{thm:boundary}.

\section{A modular obstruction for affine translates}
\label{sec:affine}

Let
\[
 E=\alpha+K_{a,m},
 \qquad
 F=\beta+K_{b,n},
\]
where \(\alpha,\beta>0\).  Put \(c_b=n/(b-1)\), and assume
\(\log a/\log b\notin\mathbb Q\).  Suppose that for all sufficiently
large \(p,q\),
\begin{equation}\label{eq:fixed-residues}
 \{a^p\alpha\}=\alpha_0,
 \qquad
 \{b^q\beta\}=\beta_0.
\end{equation}
Choose a real lift
\(\delta=\alpha_0-\beta_0\).

\begin{theorem}[Affine-anchor obstruction]\label{thm:affine}
Suppose that there are \(e\in\R\) and \(k\in\Z\) such that
\begin{equation}\label{eq:affine-window}
 k+c_b<L_e\leq U_e<k+1,
\end{equation}
where
\begin{equation}\label{eq:affine-hull}
 L_e=\delta+e\alpha+\min(0,ec_a),
 \qquad
 U_e=\delta+e\alpha+c_a+\max(0,ec_a).
\end{equation}
Then the division set
\[
 F/E=\left\{\frac{\beta+v}{\alpha+u}:
 u\in K_{a,m},\ v\in K_{b,n}\right\}
\]
is compact and nowhere dense.
\end{theorem}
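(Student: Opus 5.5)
The plan is to show that \(F/E\) misses a whole subinterval of every open interval, using ratios of the special form \(w=(a^p+e')/b^q\) with \(p,q\) large and \(e'\) near the \(e\) of \eqref{eq:affine-window}. Compactness is immediate. Since \(\alpha>0\) and \(K_{a,m}\subset[0,c_a]\), the denominator satisfies \(\alpha+u\geq\alpha>0\). Hence \((u,v)\mapsto(\beta+v)/(\alpha+u)\) is continuous on the compact set \(K_{a,m}\times K_{b,n}\), and \(F/E\) is compact. For nowhere density it then suffices to show that every nonempty open interval \(I\subset\R\) contains a nondegenerate open interval disjoint from \(F/E\).

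\emph{Step 1: the modular identity.} Fix \(p,q\) large enough that \eqref{eq:fixed-residues} holds, and suppose \(w=(a^p+e')/b^q\) lies in \(F/E\), say \(b^q(\beta+v)=(a^p+e')(\alpha+u)\). Reduce both sides modulo \(1\), using Lemma~\ref{lem:tails} (so \(\{b^qv\}\in K_{b,n}\) and \(\{a^pu\}\in K_{a,m}\)) and the stabilized residues. This gives
\[
 \{b^qv\}\equiv \delta+e'\alpha+\{a^pu\}+e'u \pmod 1 .
\]
The left side lies in \([0,c_b]\). On the right, \(\{a^pu\}\in[0,c_a]\) and \(e'u\in[\min(0,e'c_a),\max(0,e'c_a)]\). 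I will bound these two terms independently, ignoring their correlation, so the right side lies in \([L_{e'},U_{e'}]\) before reduction. If \(k+c_b<L_{e'}\leq U_{e'}<k+1\), then \([L_{e'},U_{e'}]\) is disjoint from \([0,c_b]+\Z\), which is a contradiction. So \(w\notin F/E\).

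\emph{Step 2: openness in \(e\).} The functions \(L_e,U_e\) in \eqref{eq:affine-hull} are continuous in \(e\), and the inequalities in \eqref{eq:affine-window} are strict. Hence there is \(\eta>0\) such that the window condition holds for every \(e'\in(e-\eta,e+\eta)\). By Step~1, for all sufficiently large \(p,q\) the whole open interval
\[
 J_{p,q}=\bigl((a^p+e-\eta)/b^q,\ (a^p+e+\eta)/b^q\bigr)
\]
is disjoint from \(F/E\). Note that \(\eta\) does not depend on \(p,q\).

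\emph{Step 3: placing \(J_{p,q}\) inside \(I\).} This is the step I expect to need the most care, because \(J_{p,q}\) has width only \(2\eta/b^q\). A naive approximation of a point of \(I\) by \(a^p/b^q\) would require accuracy relative to \(a^{-p}\), which Lemma~\ref{lem:scale-density} does not supply. The fix is to aim only at the middle third of \(I\), not at a specific point.

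Write \(I=(w_1,w_2)\) with \(0<w_1<w_2\); this can be arranged after shrinking, and all ratios in \(F/E\) are positive anyway. Apply Lemma~\ref{lem:scale-density} with the roles of the bases as needed, which uses \(\log a/\log b\notin\mathbb Q\). It gives arbitrarily large \(p,q\geq N\) with \(a^p/b^q\) in the middle third of \(I\). The endpoints of \(J_{p,q}\) differ from \(a^p/b^q\) by at most \((|e|+\eta)/b^q\), and this tends to \(0\) as \(q\to\infty\). So for \(q\) large, \(J_{p,q}\subset I\).

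Thus every open interval contains an open subinterval missing the closed set \(F/E\), so \(F/E\) is nowhere dense.
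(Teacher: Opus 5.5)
Your proof is correct and is essentially the paper's argument. It uses the same test ratios $(a^p+e)/b^q$, the same fractional-part identity obtained from Lemma~\ref{lem:tails} and the stabilized residues \eqref{eq:fixed-residues}, and the same contradiction with the lifted principal gap.

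The only difference is the density step. The paper fixes $e$ and applies Lemma~\ref{lem:phase-density} to the log-phases $\log_b(a^p+e)\bmod 1$. You instead work directly in $(0,\infty)$ with Lemma~\ref{lem:scale-density} and perturb $e$ to obtain whole gap intervals $J_{p,q}$. That perturbation is harmless but not needed: since $|e|/b^q\to0$, the single-$e$ points $(a^p+e)/b^q$ are already dense in $(0,\infty)$, and a closed set missing a dense set has empty interior.
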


\begin{proof}
Consider
\[
 \eta_p=\log_b(a^p+e)\pmod1.
\]
For all sufficiently large \(p\), the argument of the logarithm is
positive, and
\[
 \eta_p
 =
 p\frac{\log a}{\log b}
 +\log_b(1+ea^{-p})\pmod1.
\]
Lemma~\ref{lem:phase-density} shows that every tail is dense.

Suppose that a sufficiently late \(\eta_p\) belongs to
\(\log_b(F/E)\bmod1\).  Then for some integer \(q\) and some
\(u\in K_{a,m}\), \(v\in K_{b,n}\),
\begin{equation}\label{eq:affine-equality}
 (a^p+e)(\alpha+u)=b^q(\beta+v).
\end{equation}
Because \(E\) and \(F\) are compact and bounded away from zero,
\(q=p\log_ba+O(1)\), so \(q\to\infty\).
By Lemma~\ref{lem:tails},
\[
 \theta=\{a^pu\}\in K_{a,m},
 \qquad
 \zeta=\{b^qv\}\in K_{b,n}.
\]
Taking fractional parts in \eqref{eq:affine-equality} gives
\begin{equation}\label{eq:affine-fractional}
 \zeta
 \equiv
 \delta+\theta+e(\alpha+u)
 \pmod1.
\end{equation}
The real expression on the right lies in \([L_e,U_e]\), which
\eqref{eq:affine-window} places strictly inside one lift of the principal
gap \((c_b,1)\).  This contradicts
\(\zeta\in K_{b,n}\subset[0,c_b]\).

Thus a dense tail is absent from the compact phase image
\(\log_b(F/E)\bmod1\).  The phase image has empty interior.  Since the
quotient map \(\R\to\T\) is open and logarithm is a homeomorphism on
\((0,\infty)\), the compact set \(F/E\) has empty interior and is
nowhere dense.
\end{proof}

\begin{remark}\label{rem:affine-envelope}
A condition slightly finer than the hull test
\eqref{eq:affine-window}--\eqref{eq:affine-hull}, but still only
sufficient, is
\[
 \bigl(\delta+K_{a,m}+e(\alpha+K_{a,m})\bigr)\bmod1
 \ \cap\ K_{b,n}=\varnothing.
\]
The two displayed copies of \(K_{a,m}\) are independent: their
Minkowski sum is an upper envelope for the correlated pair
\((\{a^pu\},u)\) in \eqref{eq:affine-fractional}.
\end{remark}

For consecutive blocks the residue hypothesis follows from
\eqref{eq:anchor-residue}.  We will also need a common-anchor
consequence.

\begin{corollary}[Common anchors]\label{cor:common-anchor}
Assume \(\log a/\log b\notin\mathbb Q\), and let \(t>0\) satisfy
\[
 t(a-1),\ t(b-1)\in\Z.
\]
If
\begin{equation}\label{eq:common-anchor-condition}
 t(1-c_a-c_b)>\min(c_a,c_b)^2,
\end{equation}
then
\[
 \frac{t+K_{b,n}}{t+K_{a,m}}
\]
is compact and nowhere dense.
\end{corollary}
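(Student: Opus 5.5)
The plan is to apply Theorem~\ref{thm:affine} with \(\alpha=\beta=t\), using the reciprocal symmetry \(x\mapsto1/x\) to reduce to the case \(\min(c_a,c_b)=c_a\).

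\emph{Residues and \(\delta\).} Write \(t=d/(a-1)\) with \(d=t(a-1)\in\Z\). By the identity \eqref{eq:anchor-residue}, \(a^pt-t\in\Z\) for every \(p\geq1\), so \(\{a^pt\}=\{t\}\). Likewise \(t(b-1)\in\Z\) gives \(\{b^qt\}=\{t\}\). Hence \eqref{eq:fixed-residues} holds with \(\alpha_0=\beta_0=\{t\}\), and we may take the lift \(\delta=0\). The remaining hypotheses of the theorem, \(\alpha,\beta>0\) and \(\log a/\log b\notin\mathbb Q\), are given.

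\emph{Choosing the window.} We now need \(e\in\R\) and \(k\in\Z\) satisfying \eqref{eq:affine-window}. I would take \(k=-1\) and \(e<0\). Then \eqref{eq:affine-hull} gives
\[
 L_e=e(t+c_a),\qquad U_e=et+c_a .
\]
The two required inequalities are \(-1+c_b<e(t+c_a)\) and \(et+c_a<0\). The first says \(e>-(1-c_b)/(t+c_a)\), and the second says \(e<-c_a/t\). Such an \(e\) exists exactly when
\[
 \frac{c_a}{t}<\frac{1-c_b}{t+c_a},
 \quad\text{that is,}\quad
 c_at+c_a^2<t-c_bt,
 \quad\text{that is,}\quad
 t(1-c_a-c_b)>c_a^2 .
\]
The inequality \(L_e\leq U_e\) is automatic, since \(e<0\). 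So when \(c_a\leq c_b\), hypothesis \eqref{eq:common-anchor-condition} supplies the needed \(e\), and Theorem~\ref{thm:affine} shows that \((t+K_{b,n})/(t+K_{a,m})\) is compact and nowhere dense.

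\emph{Swapping roles.} If instead \(c_b<c_a\), I would apply the same argument with the roles of \((a,m)\) and \((b,n)\) exchanged; the hypotheses are symmetric in the two bases. This needs \(t(1-c_a-c_b)>c_b^2\), which is again exactly \eqref{eq:common-anchor-condition}. It yields that \((t+K_{a,m})/(t+K_{b,n})\) is compact and nowhere dense. Both sets lie in \([t,\infty)\) with \(t>0\), so the map \(x\mapsto1/x\) is a homeomorphism of \((0,\infty)\). It carries this quotient onto the original one and preserves compactness and nowhere denseness.

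\emph{Main obstacle.} The only genuinely delicate point is the choice of sign of \(e\). The obvious choice \(e>0\), \(k=0\) leads to the weaker condition \(t(1-c_a-c_b)>c_ac_b\). That condition is not implied by \eqref{eq:common-anchor-condition}, since \(\min(c_a,c_b)^2\leq c_ac_b\). It is the negative choice \(e<0\), \(k=-1\), combined with the reciprocal swap, that yields the \(\min^2\) threshold.
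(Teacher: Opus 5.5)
Your proposal is correct and matches the paper's proof: you use $\delta=0$ from the common anchor residue, take $e<0$ so that the hull $[e(t+c_a),\,et+c_a]$ sits in the lift $(-1+c_b,0)$, which is feasible exactly when $t(1-c_a-c_b)>c_a^2$, and obtain the $c_b^2$ alternative by swapping the coordinates and inverting. Your explicit check of the stabilized residues via \eqref{eq:anchor-residue} is a harmless addition, and your remark that the choice $e>0$, $k=0$ only gives $t(1-c_a-c_b)>c_ac_b$ is accurate and consistent with Remark~\ref{rem:integer-observers}.
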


\begin{proof}
Here the two anchor residues coincide, so \(\delta=0\).  With \(e<0\),
the interval in
\eqref{eq:affine-hull} is
\[
 [\,e(t+c_a),\,et+c_a\,].
\]
It can be placed inside the lift \((-1+c_b,0)\) whenever
\[
 -\frac{1-c_b}{t+c_a}<e<-\frac{c_a}{t}.
\]
This interval is nonempty precisely when
\[
 t(1-c_a-c_b)>c_a^2.
\]
Interchanging the two coordinates and taking reciprocal quotients gives
the analogous condition with \(c_b^2\).  Taking the cheaper alternative
proves \eqref{eq:common-anchor-condition}.
\end{proof}

Corollary~\ref{cor:small-counterexample} follows at once:
\(c_a=1/2\), \(c_b=1/4\), \(t=1\), and
\[
 1\left(1-\frac12-\frac14\right)
 >
 \min\left(\frac12,\frac14\right)^2.
\]
Here the strict alternative is realized after interchanging the two
coordinates.  For the reciprocal quotient
\[
 \frac{1+K_{3,1}}{1+K_{5,1}},
\]
the direct window is
\[
 -\frac25<e<-\frac14.
\]
It is therefore compact and nowhere dense; inversion gives the actual
slope quotient \((1+K_{5,1})/(1+K_{3,1})\) seen from \((-1,-1)\).

\section{The all-observers thickness boundary}
\label{sec:thickness}

We now prove the positive half of Theorem~\ref{thm:boundary}.  Write
\[
 g_a=\frac{1-c_a}{a},
 \qquad
 g_b=\frac{1-c_b}{b}.
\]
Suppose \(c_a+c_b\geq1\).  Astels' interval theorem, applied to
\(K_{b,n}\) and \(-\rho K_{a,m}\), gives
\begin{equation}\label{eq:astels-difference}
 K_{b,n}-\rho K_{a,m}=[-\rho c_a,c_b]
\end{equation}
whenever
\begin{equation}\label{eq:astels-window}
 \rho\in W:=
 \left(\frac{g_b}{c_a},\frac{c_b}{g_a}\right).
\end{equation}
Indeed, \eqref{eq:astels-window} and the automatic inequalities
\(c_a>g_a\), \(c_b>g_b\) are exactly
\[
 \min(c_b,\rho c_a)>\max(g_b,\rho g_a),
\]
while \eqref{eq:normalized-thickness} supplies
\(S(K_{a,m})+S(K_{b,n})\geq1\).  This is where the equality case in
Astels' theorem is used.

\begin{proof}[Proof of Theorem~\ref{thm:boundary}, positive direction]
Fix \(O=(X,Y)\in\R^2\).  On the first axis choose
\(x_*\in\{0,c_a\}\) and its inward sign
\(\sigma_x\in\{+1,-1\}\), with \(+1\) at the left endpoint and
\(-1\) at the right endpoint, so that
\begin{equation}\label{eq:corner-choice}
 \sigma_x(x_*-X)<0,
 \qquad
 x_*-X\neq0.
\end{equation}
Use the left endpoint when \(X>0\) and the right endpoint when
\(X\leq0\).  Choose \(y_*,\sigma_y\) analogously.  Put
\[
 A=x_*-X,\qquad B=y_*-Y,\qquad r_0=\frac BA,
\]
so
\[
 \kappa:=\sigma_x\sigma_y r_0>0.
\]

The selected corner cylinders have the form
\[
 x=x_*+\sigma_xa^{-p}u,
 \qquad
 y=y_*+\sigma_yb^{-q}v,
\]
where \(u\in K_{a,m}\), \(v\in K_{b,n}\).  The equation that their
slope from \(O\) is \(r\) becomes
\begin{equation}\label{eq:corner-difference}
 v-\rho(r)u=t(r),
\end{equation}
with
\begin{equation}\label{eq:corner-parameters}
 \rho(r)=\sigma_x\sigma_y r\frac{b^q}{a^p},
 \qquad
 t(r)=\sigma_yA b^q(r-r_0).
\end{equation}

First suppose \(\log a/\log b\notin\mathbb Q\).  By
Lemma~\ref{lem:scale-density}, the scales \(b^q/a^p\), with \(p,q\)
arbitrarily large, are dense in \(\R_{>0}\).  Choose deep cylinders for
which
\[
 \rho(r_0)=\kappa\frac{b^q}{a^p}\in W.
\]
At \(r=r_0\), one has \(t(r_0)=0\), strictly inside
\([-\rho(r_0)c_a,c_b]\).  By continuity, for every \(r\) in a small
open interval about \(r_0\), both \(\rho(r)\in W\) and
\[
 t(r)\in[-\rho(r)c_a,c_b].
\]
Equations \eqref{eq:astels-difference} and
\eqref{eq:corner-difference} realize every such slope.  Taking the
cylinders deep keeps their first coordinates away from \(X\), so this
is an open arc in the radial image.  The construction remains valid when
\(O\in K_{a,m}\times K_{b,n}\), because the selected deep corner
rectangle avoids \(O\).

If the bases are multiplicatively dependent, write
\[
 a=A_0^r,\qquad b=A_0^s,\qquad \gcd(r,s)=1.
\]
The available relative scales are \(A_0^k\).  Bézout gives an integer
solution of \(sq-rp=k\), and adding
\((p,q)\mapsto(p+st,q+rt)\) makes both exponents arbitrarily large.
Moreover,
\[
 \frac{\sup W}{\inf W}
 =
 ab\,\frac{c_a}{1-c_a}\frac{c_b}{1-c_b}
 \geq ab=A_0^{r+s}>A_0,
\]
where the inequality is equivalent to \(c_a+c_b\geq1\).
Thus every multiplicative translate of \(A_0^\Z\) meets \(W\), and the
same corner argument applies.
\end{proof}

\begin{proof}[Failure of the all-observers conclusion in Theorem~\ref{thm:boundary}]
Assume \(\log a/\log b\notin\mathbb Q\) and
\(\gamma=1-c_a-c_b>0\).  Choose a sufficiently large positive integer
\(T\) so that
\[
 T\gamma>\min(c_a,c_b)^2.
\]
Corollary~\ref{cor:common-anchor} says that the quotient
\[
 \frac{T+K_{b,n}}{T+K_{a,m}}
\]
is compact and nowhere dense.  This is precisely the slope image of
\(K_{a,m}\times K_{b,n}\) from \((-T,-T)\).  Hence the
all-observers property fails.
The explicit unbounded open set asserted in part~(ii) is supplied by
Theorem~\ref{thm:open-arms} below.
\end{proof}

\begin{remark}\label{rem:egrs}
For these initial consecutive blocks, \(c_a+c_b=1\) is also the
numerical threshold in the two-base restricted-digit recurrence theorem
of Erd\H{o}s--Graham--Ruzsa--Straus \cite[Theorem~1]{EGRS}.  The
conclusions are different: the present radial-interior statement follows
from a local Astels argument, while the EGRS theorem is arithmetic.
\end{remark}

The boundary is not a Hausdorff-dimension boundary.  For example, put
\[
 a_r=6^r,\qquad m_r=\frac{2(6^r-1)}5,
\]
\[
 b_s=11^s,\qquad n_s=\frac{2(11^s-1)}5.
\]
Then \(c_{a_r}=c_{b_s}=2/5\), the bases are multiplicatively
independent, and
\[
 \Hd K_{a_r,m_r}\longrightarrow1,
 \qquad
 \Hd K_{b_s,n_s}\longrightarrow1.
\]
Nevertheless the all-observers property fails for every \(r,s\).
Thus no fixed dimension threshold below two replaces the thickness
boundary within this family.

\section{Open arms and dense rational bad observers}
\label{sec:open-arms}

The thin-side witness above is arithmetic.  We next show both that bad
observers occupy an open region and that arithmetic bad observers occur
arbitrarily close to the four corners of the product.  Assume throughout
this section that
\[
 \gamma=1-c_a-c_b>0,
 \qquad
 \frac{\log a}{\log b}\notin\mathbb Q.
\]
For \(A,B>0\), define the logarithmic slope image from \((-A,-B)\):
\[
 \Lambda_{A,B}
 =
 \left\{
 \log_b\frac{B+v}{A+u}:
 u\in K_{a,m},\ v\in K_{b,n}
 \right\}.
\]

\begin{theorem}[Open bad-observer arms]\label{thm:open-arms}
Set
\[
 A_*=\frac{c_a(c_a+c_b)}{2\gamma},
 \qquad
 B_*=\frac{c_b(c_a+c_b)}{2\gamma}.
\]
If \(A>A_*\) or \(B>B_*\), then
\(\Lambda_{A,B}\), and hence the radial image from \((-A,-B)\),
is compact and nowhere dense.  Consequently the bad-observer set
contains
\begin{equation}\label{eq:open-arms}
 \{(x,y):\operatorname{dist}(x,[0,c_a])>A_*,\ y\notin[0,c_b]\}
 \ \cup\
 \{(x,y):x\notin[0,c_a],\ \operatorname{dist}(y,[0,c_b])>B_*\}.
\end{equation}
\end{theorem}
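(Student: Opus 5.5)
The plan is to run the phase argument from the proof of Theorem~\ref{thm:affine}, but with anchors \(A,B\) that are arbitrary positive reals. In that case the residues \(\{a^pA\}\) and \(\{b^qB\}\) no longer stabilize, and the free parameter \(e\) must absorb this. I treat the case \(A>A_*\). The case \(B>B_*\) follows by interchanging the coordinates, which replaces the slope by its reciprocal and \(\log_b\) by \(-\log_b\) up to a smooth change of phase coordinate.

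\textbf{Step 1: the phases and the linearized congruence.} As before, put \(\eta_p=\log_b(a^p+e)\bmod 1\), with \(e\) ranging over a fixed compact interval \([-E,E]\). Suppose \(\eta_p\in\Lambda_{A,B}\bmod1\). Then \((a^p+e)(A+u)=b^q(B+v)\) for some integer \(q\to\infty\). Taking fractional parts and using Lemma~\ref{lem:tails} gives
\[
\zeta\equiv \delta_{p,q}+\theta+e(A+u)\pmod 1,\qquad \delta_{p,q}=\{a^pA\}-\{b^qB\}.
\]
Here \(\theta=\{a^pu\}\in K_{a,m}\) and \(\zeta=\{b^qv\}\in K_{b,n}\). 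The right-hand side lies in
\[
[\delta_{p,q}+eA+\min(0,ec_a),\ \delta_{p,q}+eA+c_a+\max(0,ec_a)],
\]
an interval of length \(c_a(1+|e|)\). A contradiction follows as soon as this interval sits strictly inside a lift of the gap \((c_b,1)\).

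\textbf{Step 2: choose \(e\) depending on \(\delta\).} The phase \(\delta_{p,q}\) is now uncontrolled. I therefore show that for every \(\delta\in\T\) there is some \(e\in[-E,E]\) for which the window above lies in \((k+c_b,k+1)\) for some integer \(k\), with a uniform positive margin. The available slack is \(1-c_b-c_a(1+|e|)=\gamma-c_a|e|\), which requires \(|e|<\gamma/c_a\). As \(e\) runs over \([-E,E]\), the shift \(eA\) sweeps an interval of length \(2EA\), and the admissible translates of the window have width \(\gamma-c_a|e|\). Every residue \(\delta\) is caught once the swept admissible set covers a full period. Carrying this out with both signs of \(e\) and optimizing \(E\) should give exactly the condition \(A>c_a(c_a+c_b)/(2\gamma)=A_*\). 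This bookkeeping is the main obstacle: one must track the asymmetric endpoints \(\min(0,ec_a)\) and \(\max(0,ec_a)\) and check that the covering closes up with a strict margin. I would first verify it by plotting the admissible region in the \((e,\delta)\)-plane as a union of slanted strips. By compactness of \(\T\), finitely many values \(e_1,\dots,e_N\) with a uniform margin \(\mu>0\) suffice.

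\textbf{Step 3: density and conclusion.} Fix an open arc \(I\subset\T\). By Lemma~\ref{lem:phase-density}, or directly from the irrational rotation \(p\log_b a\), there are arbitrarily large \(p\) with \(p\log_b a\) deep inside \(I\). Since \(\log_b(1+e_ja^{-p})\to0\) uniformly over the finitely many \(e_j\), every \(\eta_p^{(j)}\) then lies in \(I\). The integer \(q\) is determined up to \(O(1)\) by \(p\). For each of these finitely many \(q\), Step 2 supplies some \(e_j\) adapted to \(\delta_{p,q}\), and the \(o(1)\) errors are smaller than \(\mu\). The one obstacle is that \(q\) is not known in advance, so \(e_j\) cannot be chosen before \(q\). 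I would handle this by fixing the target phase first: take \(\eta=\eta_p^{(j)}\) and argue that any representation forces \(q\) equal to one specific integer, namely the nearest integer to \(\log_b((a^p+e)A/B)\), up to a bounded ambiguity resolved by refining the \(e_j\).

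It follows that \(I\) contains a point outside the compact set \(\Lambda_{A,B}\bmod1\), so this set has empty interior. Hence \(\Lambda_{A,B}\), and with it the radial image, is compact and nowhere dense. Finally, the set \eqref{eq:open-arms} is obtained from the southwest case using the symmetries \(K_{a,m}=c_a-K_{a,m}\) and \(K_{b,n}=c_b-K_{b,n}\) of \eqref{eq:geometry}. Reflecting in the appropriate axes moves any exterior corner observer to one of the form \((-A,-B)\), with \(A=\operatorname{dist}(x,[0,c_a])\) and \(B=\operatorname{dist}(y,[0,c_b])\).
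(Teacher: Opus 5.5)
Your Steps~1 and~2 are the paper's argument in different notation: your \(e\) is its \(t\), and your \(\delta_{p,q}\) is its residue \(R=a^pA-b^qB\). The bookkeeping you defer does close up exactly at \(A_*\). For \(0\le e<\gamma/c_a\) the admissible residues form \((c_b-eA,\,1-c_a-e(A+c_a))\), whose union is \((c_b-A\gamma/c_a,\,1-c_a)\). For \(e\le0\) the union is \((c_b,\,1-c_a+A\gamma/c_a)\). The total length \(\gamma(1+2A/c_a)\) exceeds \(1\) iff \(A>A_*\). The reflection step is also the paper's.

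The gap is in Step~3, exactly where you flag it. You test membership in \(\Lambda_{A,B}\bmod 1\), so a representation of \(\log_b(a^p+e_j)\bmod1\) may use any integer \(q\) with \(\log_b(a^p+e_j)-q\in\Lambda_{A,B}\). The diameter of \(\Lambda_{A,B}\) is \(\log_b\frac{B+c_b}{B}+\log_b\frac{A+c_a}{A}\), which is unbounded as \(B\downarrow0\) even when \(A>A_*\). So several \(q\), and hence several residues \(\delta_{p,q}\), can occur, and a single \(e_j\) would have to defeat all of them at once. Step~2 gives no such \(e\). ``Refining the \(e_j\)'' does not help, because the difficulty is simultaneity, not resolution.

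The missing idea is not to reduce modulo \(1\). \(\Lambda_{A,B}\) is a compact subset of \(\R\), and a real test point carries its own \(q\). With \(L=\log_b(B/A)\), the paper proceeds in this order:
\begin{enumerate}
\item fix \(q_{p,j}=\lfloor p\log_b a-L\rfloor+j\);
\item compute \(R_{p,j}=a^pA-b^{q_{p,j}}B\bmod1\);
\item only then pick \(t_{p,j}\) with \(|t_{p,j}|<\gamma/c_a\) from the window;
\item test the real number \(x_{p,j}=\log_b(a^p+t_{p,j})-q_{p,j}\).
\end{enumerate}
If \(x_{p,j}\in\Lambda_{A,B}\), then \((a^p+t_{p,j})(A+u)=b^{q_{p,j}}(B+v)\) with that very \(q_{p,j}\), so only the residue already handled appears. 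Uniformly in the choices, \(x_{p,j}=L+\{p\log_b a-L\}-j+o(1)\). Hence these forbidden points are dense in each strip \((L-j,L-j+1)\), and no interval fits inside \(\Lambda_{A,B}\).

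With this order of choices (first \(q\), then \(e\)), your finite family \(e_1,\dots,e_N\) and margin \(\mu\) are unnecessary. The congruence is exact, and \(|e|<\gamma/c_a\) already makes the \(o(1)\) uniform. Working in \(\R\) also makes the case \(B>B_*\) a genuine coordinate swap. On \(\T\) it is not, because \(\log_a\) of the reciprocal slope modulo \(1\) is not a function of \(\log_b\) of the slope modulo \(1\).
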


\begin{proof}
We prove the assertion under \(A>A_*\); the other case follows by
interchanging the coordinates and taking reciprocal slopes.
Put
\[
 T=\frac{\gamma}{c_a}.
\]
For \(t\geq0\), the condition
\[
 s+\theta+tu\in(c_b,1)
 \qquad
 (\theta,u\in[0,c_a])
\]
is equivalent to
\[
 c_b<s<1-c_a-c_at.
\]
As \(0\leq t<T\), the possible residues \(s-At\) fill
\[
 W_+
 =
 \left(c_b-\frac{A\gamma}{c_a},1-c_a\right).
\]
Indeed, the feasible region in the \((t,s)\)-plane is convex, so its
projection under \((t,s)\mapsto s-At\) is an interval; the displayed
endpoints follow by letting \(t\downarrow0\) and \(t\uparrow T\).
For \(t=-z\leq0\), the same containment is equivalent to
\[
 c_b+c_az<s<1-c_a,
\]
and \(0\leq z<T\) gives
\[
 W_-
 =
 \left(c_b,1-c_a+\frac{A\gamma}{c_a}\right).
\]
The same convex-projection argument gives this whole interval.
Thus the available residue interval is
\begin{equation}\label{eq:residue-window}
 W
 =
 \left(
 c_b-\frac{A\gamma}{c_a},
 1-c_a+\frac{A\gamma}{c_a}
 \right),
\end{equation}
of length
\[
 |W|=\gamma\left(1+\frac{2A}{c_a}\right).
\]
The condition \(A>A_*\) is exactly \(|W|>1\).  Therefore for every
residue \(R\in\R/\Z\), there are \(s,t\), with \(|t|<T\), such that
\begin{equation}\label{eq:residue-fit}
 s-At\equiv R\pmod1,
 \qquad
 s+\theta+tu\in(c_b,1)
\end{equation}
for all \(\theta,u\in[0,c_a]\).

Write
\[
 \alpha=\log_ba,\qquad L=\log_b(B/A).
\]
For each fixed \(j\in\Z\) and all sufficiently large \(p\), put
\[
 q_{p,j}=\lfloor p\alpha-L\rfloor+j.
\]
Apply \eqref{eq:residue-fit} to
\[
 R_{p,j}=a^pA-b^{q_{p,j}}B\pmod1
\]
and choose corresponding \(s_{p,j},t_{p,j}\).  The choices satisfy
\[
 |t_{p,j}|<T
\]
uniformly in \(p\).  Define
\[
 x_{p,j}=\log_b(a^p+t_{p,j})-q_{p,j}.
\]
We claim that \(x_{p,j}\notin\Lambda_{A,B}\).  Otherwise, for some
\(u\in K_{a,m}\), \(v\in K_{b,n}\),
\[
 (a^p+t_{p,j})(A+u)=b^{q_{p,j}}(B+v).
\]
Let
\[
 \theta=\{a^pu\}\in K_{a,m},
 \qquad
 \zeta=\{b^{q_{p,j}}v\}\in K_{b,n}.
\]
Taking fractional parts yields
\[
 \zeta
 \equiv
 s_{p,j}+\theta+t_{p,j}u
 \pmod1.
\]
The right side has a representative in \((c_b,1)\) by
\eqref{eq:residue-fit}, while \(\zeta\in[0,c_b]\), a contradiction.

Finally, uniformly over the choices of \(t_{p,j}\),
\[
 x_{p,j}
 =
 L+\{p\alpha-L\}-j+o(1).
\]
For each fixed \(j\), every open subinterval of
\((L-j,L+1-j)\) contains arbitrarily late forbidden points.  If
\(\Lambda_{A,B}\) contained an open interval,
one could choose a smaller interval avoiding the boundaries
\(L+\Z\) and lying in one such unit strip; a forbidden point would
then enter it.  Thus \(\Lambda_{A,B}\) has empty interior.  It is
compact, hence nowhere dense.

It remains to justify the full observer region in
\eqref{eq:open-arms}.  Put \(P=K_{a,m}\times K_{b,n}\).  By
\eqref{eq:geometry} and its \(b\)-analogue, \(P\) is invariant under
the coordinate reflections
\[
 \sigma_1(u,v)=(c_a-u,v),
 \qquad
 \sigma_2(u,v)=(u,c_b-v).
\]
If \(Q_i\) is the linear part of \(\sigma_i\), then, with the usual
omission of an observer belonging to \(P\),
\[
 \Pi_{\sigma_i(O)}(\sigma_i(z))=Q_i\Pi_O(z)
 \qquad(z\in P\setminus\{O\}).
\]
Thus compactness and nowhere denseness of the radial image are invariant
under both reflections.  The southwest assertion follows by writing
\(O=(-A,-B)\), and its images under the group generated by
\(\sigma_1,\sigma_2\) are exactly \eqref{eq:open-arms}.
\end{proof}

\begin{proposition}[Dense rational bad observers]
\label{prop:dense-rational-observers}
Define
\[
 \mathcal A_a
 =
 \left\{
 \frac{N}{a^P}:
 N\in\Z_{\geq1},\ P\in\Z_{\geq0}
 \right\},
 \qquad
 \mathcal B_b
 =
 \left\{
 \frac{M+1-c_b}{b^Q}:
 M,Q\in\Z_{\geq0}
 \right\}.
\]
For every \(A\in\mathcal A_a\) and \(B\in\mathcal B_b\), the radial
image of \(K_{a,m}\times K_{b,n}\) from \((-A,-B)\) is compact and
nowhere dense.  Consequently, rational bad observers are dense in each
of the four components of
\[
 \bigl(\R\setminus[0,c_a]\bigr)
 \times
 \bigl(\R\setminus[0,c_b]\bigr).
\]
In particular, they occur arbitrarily close to each corner of
\([0,c_a]\times[0,c_b]\).
\end{proposition}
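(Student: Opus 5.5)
The plan is to obtain Proposition~\ref{prop:dense-rational-observers} as a direct instance of the affine-anchor obstruction, Theorem~\ref{thm:affine}, with \(\alpha=A\) and \(\beta=B\). Then \(E=A+K_{a,m}\) and \(F=B+K_{b,n}\), and \(F/E\) is exactly the slope image of \(K_{a,m}\times K_{b,n}\) from \((-A,-B)\). The observer is strictly southwest of the product, so by the discussion in Section~\ref{sec:prelim} slope is a homeomorphic coordinate on the relevant compact arc of directions. Hence compactness and nowhere denseness of \(F/E\) transfer to the radial image.

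First I compute the stabilized residues in \eqref{eq:fixed-residues}.
\begin{itemize}
\item If \(A=N/a^P\), then \(a^pA\in\Z\) for \(p\geq P\), so \(\alpha_0=0\).
\item If \(B=(M+1-c_b)/b^Q\), then for \(q\geq Q\) we have \(b^qB=b^{q-Q}M+b^{q-Q}(1-c_b)\). Since \(1-c_b=(b-1-n)/(b-1)\), the anchor-residue identity \eqref{eq:anchor-residue} with \(d=b-1-n\) gives \(b^{q-Q}(1-c_b)\equiv 1-c_b\pmod 1\). Because \(0<1-c_b<1\), this yields \(\beta_0=1-c_b\).
\end{itemize}
So I take the lift \(\delta=c_b-1\).

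Next I verify the window \eqref{eq:affine-window} with \(k=-1\) and a small \(e>0\). Here \(L_e=c_b-1+eA\) and \(U_e=c_b-1+eA+c_a+ec_a\).
\begin{itemize}
\item The lower condition \(-1+c_b<L_e\) holds because \(eA>0\).
\item The upper condition \(U_e<0\) is equivalent to \(e(A+c_a)<\gamma\).
\end{itemize}
Thus any \(e\in(0,\gamma/(A+c_a))\) works, and Theorem~\ref{thm:affine} gives that \(F/E\) is compact and nowhere dense.

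The one delicate point is that \(e=0\) is exactly the borderline. There the congruence reads \(\zeta\equiv\theta+c_b\), which is still compatible with \(\theta=0\), \(\zeta=c_b\). The strict inequality \(eA>0\) pushes the window off the endpoint \(c_b\), and the strict inequality \(\gamma>0\) leaves room for this. I expect this boundary bookkeeping to be the main thing to get right.

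For density, note that \(\mathcal A_a\) is dense in \((0,\infty)\), since its elements at level \(P\) have spacing \(a^{-P}\). Likewise \(\mathcal B_b\) is dense, since at level \(Q\) its points \((M+1-c_b)/b^Q\) have spacing \(b^{-Q}\) and start at \((1-c_b)/b^Q\to0\). Hence the observers \((-A,-B)\) are dense in the southwest component \((-\infty,0)\times(-\infty,0)\), and they are rational because \(c_b\in\mathbb Q\). Taking \(A,B\) small places them arbitrarily close to the corner \((0,0)\).

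To reach the other three components, I apply the reflections \(\sigma_1,\sigma_2\) from the proof of Theorem~\ref{thm:open-arms}. These preserve \(P=K_{a,m}\times K_{b,n}\) and intertwine radial projections with the linear maps \(Q_i\), so compactness and nowhere denseness of the image are preserved. Since \(c_a,c_b\) are rational, the maps \(\sigma_i\) preserve rationality of observers. They are homeomorphisms that carry the southwest component onto each of the other three and the corner \((0,0)\) onto the other three corners, which gives density and corner accumulation in every component.
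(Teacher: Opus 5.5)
Your proposal is correct and follows the paper's proof essentially step for step. It uses the same stabilized residues \(\alpha_0=0\) and \(\beta_0=1-c_b\), the same lift \(\delta=c_b-1\) with \(k=-1\) and \(0<e<\gamma/(A+c_a)\) in Theorem~\ref{thm:affine}, the same grid-density argument, and the same reflections \(\sigma_1,\sigma_2\) for the other three components. The only thing worth stating explicitly is that Theorem~\ref{thm:affine} relies on the section's standing hypothesis \(\log a/\log b\notin\mathbb Q\) (in addition to \(\gamma>0\), which you do invoke).
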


\begin{proof}
Fix \(A=N/a^P\in\mathcal A_a\) and
\(B=(M+1-c_b)/b^Q\in\mathcal B_b\).  For every \(p\geq P\),
\[
 \{a^pA\}=0.
\]
Moreover,
\[
 1-c_b=\frac{b-1-n}{b-1},
\]
so for every \(q\geq Q\),
\[
 b^qB
 =
 b^{q-Q}M+b^{q-Q}(1-c_b)
 \equiv1-c_b\pmod1,
\]
because \(b^{q-Q}-1\) is divisible by \(b-1\).  Since
\(0<1-c_b<1\), this gives
\[
 \{b^qB\}=1-c_b.
\]

Apply Theorem~\ref{thm:affine} to
\[
 E=A+K_{a,m},
 \qquad
 F=B+K_{b,n}.
\]
The stabilized anchor-residue difference has the lift
\[
 \delta=0-(1-c_b)=c_b-1.
\]
Choose
\[
 0<e<\frac{\gamma}{A+c_a}
\]
and take \(k=-1\).  Since \(e>0\), the endpoints in
\eqref{eq:affine-hull} are
\[
 L_e=c_b-1+eA
\]
and
\[
 U_e
 =c_b-1+c_a+e(A+c_a)
 =-\gamma+e(A+c_a).
\]
Therefore
\[
 -1+c_b<L_e\leq U_e<0.
\]
This is exactly the lifted principal-gap condition
\eqref{eq:affine-window}.  Theorem~\ref{thm:affine} shows that
\[
 \frac{B+K_{b,n}}{A+K_{a,m}}
\]
is compact and nowhere dense.  This quotient is the positive slope
image from \((-A,-B)\), and the slope coordinate is a homeomorphism
onto the corresponding arc of \(S^1\).  Hence the radial image has the
same properties.

Both \(\mathcal A_a\) and \(\mathcal B_b\) are dense in
\(\R_{>0}\): at levels \(P\) and \(Q\) they are translated grids with
meshes \(a^{-P}\) and \(b^{-Q}\), respectively.  Thus the displayed
family gives a countable dense set of rational bad observers in the
southwest quadrant.

Finally, the coordinate reflections \(\sigma_1,\sigma_2\) from the
proof of Theorem~\ref{thm:open-arms} preserve the product and the
topological type of each radial image.  They send \((-A,-B)\) to
\[
 (-A,-B),\quad
 (c_a+A,-B),\quad
 (-A,c_b+B),\quad
 (c_a+A,c_b+B).
\]
Since \(c_a,c_b\in\mathbb Q\), rationality is preserved.  The four
reflected families are dense in the four asserted components.
\end{proof}

\begin{remark}[Integer observers]\label{rem:integer-observers}
If \(A,B\in\Z_{>0}\), then
\(a^pA-b^qB\equiv0\pmod1\), so only the residue \(0\), rather than
the whole circle, must lie in \eqref{eq:residue-window}.  On the
\(A\)-side this happens if either
\[
 A>\frac{c_ac_b}{\gamma}
 \qquad\text{or}\qquad
 A>\frac{c_a^2}{\gamma}.
\]
Taking the cheaper alternative, and arguing symmetrically on the
\(B\)-side, gives the improved sufficient condition
\[
 A>\frac{c_a\min(c_a,c_b)}{\gamma}
 \quad\text{or}\quad
 B>\frac{c_b\min(c_a,c_b)}{\gamma}.
\]
At \(A=B=1\), this is
\[
 c_a+c_b+\min(c_a,c_b)^2<1.
\]
\end{remark}

For \(K_{3,1}\times K_{5,1}\), one has
\[
 A_*=\frac34,\qquad B_*=\frac38.
\]
Hence \eqref{eq:open-arms} gives a concrete open set of bad observers.
At \(A=B=1\), both threshold inequalities are strict, so
Theorem~\ref{thm:open-arms} also gives a second proof of
Corollary~\ref{cor:small-counterexample}, independent of the
common-anchor corollary.

\begin{corollary}[An explicit bad-observer ball]
\label{cor:k35-bad-ball}
Every observer in the open Euclidean ball
\[
 B_{\R^2}\left((-1,-1),\frac58\right)
\]
sees a compact nowhere-dense radial image of
\(K_{3,1}\times K_{5,1}\).
\end{corollary}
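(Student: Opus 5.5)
The plan is to deduce this directly from Theorem~\ref{thm:open-arms}, specialized to \(a=3\), \(m=1\), \(b=5\), \(n=1\). Here \(c_a=1/2\), \(c_b=1/4\) and \(\gamma=1/4\). The bases are multiplicatively independent, so the standing hypotheses of Section~\ref{sec:open-arms} hold. As computed just before the statement, \(A_*=3/4\) and \(B_*=3/8\). It therefore suffices to show that the open ball \(B_{\R^2}((-1,-1),5/8)\) lies inside the open arm region \eqref{eq:open-arms}.

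First I record the coordinate box containing the ball. Every point \((x,y)\) of the ball satisfies \(|x+1|<5/8\) and \(|y+1|<5/8\). Hence \(x,y\in(-13/8,-3/8)\), and in particular \(x<0\) and \(y<0\). So \(x\notin[0,1/2]\) and \(y\notin[0,1/4]\) automatically. Moreover the distances to the hulls are simply \(\operatorname{dist}(x,[0,c_a])=-x\) and \(\operatorname{dist}(y,[0,c_b])=-y\).

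Writing \(O=(x,y)=(-A,-B)\) with \(A,B>0\), membership in \eqref{eq:open-arms} reduces to showing \(A>3/4\) or \(B>3/8\). Suppose both fail, so that \(x\geq-3/4\) and \(y\geq-3/8\). The closest point to \((-1,-1)\) in the closed quadrant \(\{x\geq-3/4,\ y\geq-3/8\}\) is its corner \((-3/4,-3/8)\). Its distance from \((-1,-1)\) is
\[
 \sqrt{(1/4)^2+(5/8)^2}=\frac{\sqrt{29}}{8}>\frac58 .
\]
So no point of the ball lies in that quadrant, and every observer in the ball satisfies \(A>A_*\) or \(B>B_*\).

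Theorem~\ref{thm:open-arms}, applied in the southwest form \(O=(-A,-B)\) without any reflections, then shows that \(\Lambda_{A,B}\) is compact and nowhere dense. It follows that the radial image is compact and nowhere dense as well. The ball stays strictly southwest of the product, so no observer belongs to \(K_{3,1}\times K_{5,1}\) and no omission issue arises.

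The only step requiring any care is the elementary geometry check that the ball avoids the bad quadrant. The margin is small, since \(\sqrt{29}/8\approx0.673\) against \(0.625\). I would therefore verify it by the exact comparison \(29/64>25/64\) rather than by decimals.
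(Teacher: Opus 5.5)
Your proof is correct and takes the paper's route: you check that the ball lies in the open-arm region \eqref{eq:open-arms} of Theorem~\ref{thm:open-arms} for $K_{3,1}\times K_{5,1}$. The paper's check is shorter, because the bound $y<-3/8$ you already derived gives $B=-y>3/8=B_*$ outright, so the second arm alone covers the whole ball and the corner-distance comparison with $\sqrt{29}/8$ is unnecessary.
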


\begin{proof}
If \(O=(x,y)\) belongs to the displayed ball, then
\[
 x<-\frac38<0,\qquad y<-\frac38.
\]
Thus \(x\notin[0,1/2]\) and
\[
 \operatorname{dist}\left(y,\left[0,\frac14\right]\right)
 =-y>\frac38=B_*.
\]
The second arm in Theorem~\ref{thm:open-arms} applies.
\end{proof}

There is also an open family of positive-measure nowhere-dense images.
For \(K_{101,40}\times K_{103,40}\),
\[
 c_a=\frac25,\qquad c_b=\frac{20}{51},
 \qquad
 A_*=\frac{202}{265},\qquad
 B_*=\frac{2020}{2703}.
\]
Both Hausdorff dimensions exceed \(4/5\).  Banaji--Yu
\cite[Theorem~4.12]{BanajiYu} therefore gives positive angular measure
at each fixed observer, while Theorem~\ref{thm:open-arms} gives
nowhere denseness throughout \eqref{eq:open-arms}.  The positive-measure
conclusion is pointwise in the observer; no uniform lower bound is
asserted.

As \(c_a+c_b\uparrow1\), the pair of open-arm thresholds cannot remain
bounded, since
\[
 A_*+B_*=
 \frac{(c_a+c_b)^2}{2(1-c_a-c_b)}\longrightarrow\infty.
\]
If both \(c_a\) and \(c_b\) stay bounded away from zero, then each
threshold tends to infinity.  For every fixed pair below the boundary,
however,
Proposition~\ref{prop:dense-rational-observers} still gives arithmetic
bad observers arbitrarily close to the four corners.  At and above the
boundary, every observer sees interior.  Theorem~\ref{thm:open-arms}
and Proposition~\ref{prop:dense-rational-observers} provide explicit
subsets of the bad-observer set, not a characterization of that set.

\section{Fat division sets and product intervals}
\label{sec:contrast}

We now prove Theorem~\ref{thm:contrast-intro}.  The sets in
\eqref{eq:contrast-family-intro} can be written
\begin{equation}\label{eq:contrast-affine}
 E_r=\frac{13}{24}+K_{A_r,11q_r},
 \qquad
 F_s=\frac1{24}+K_{B_s,11t_s}.
\end{equation}
Both residual hulls have length \(11/24\), while the anchor residues
differ by \(1/2\).  In Theorem~\ref{thm:affine}, take \(e=0\).  The
source lies in
\[
 \frac12+K_{A_r,11q_r}
 \subset\left[\frac12,\frac{23}{24}\right],
\]
whereas the target lies in
\[
 K_{B_s,11t_s}\subset\left[0,\frac{11}{24}\right].
\]
The two intervals are disjoint.  Since \(25^r\) and \(49^s\) are
multiplicatively independent,
\begin{equation}\label{eq:division-nowhere}
 F_s/E_r\quad\text{is compact and nowhere dense}
\end{equation}
for every \(r,s\geq1\).

The number of digits in either block with base \(N\) is
\[
 L_N=\frac{11(N-1)}{24}+1=\frac{11N+13}{24}.
\]
The elementary inequality \(L_{xy}\geq L_xL_y\) follows from
\[
 L_{xy}-L_xL_y
 =
 \frac{143(x-1)(y-1)}{576}\geq0,
\]
and gives
\[
 \Hd E_r\geq\frac{\log12}{\log25}=0.771979\ldots,
\qquad
 \Hd F_s\geq\frac{\log23}{\log49}=0.805662\ldots.
\]
Both bounds exceed
\[
 \vartheta=\frac{\sqrt{65}-5}{4}=0.765564\ldots.
\]
Banaji--Yu \cite[Theorem~4.12]{BanajiYu}, applied at the origin, gives
positive angular measure for the radial image of \(E_r\times F_s\).
Since
\[
 E_r\subset[13/24,1],
 \qquad
 F_s\subset[1/24,1/2],
\]
angle and slope are bi-Lipschitz on the relevant compact arc.  Therefore
\begin{equation}\label{eq:division-positive}
 |F_s/E_r|>0.
\end{equation}
The quotient is perfect: with \(x\in E_r\) fixed, approximate any
\(y\in F_s\) by distinct \(y_k\in F_s\), so
\(y_k/x\to y/x\).  Equations
\eqref{eq:division-nowhere}--\eqref{eq:division-positive} make
\(F_s/E_r\) a fat Cantor set.

The exact dimensions are
\[
 \Hd E_r=\frac{\log L_{A_r}}{\log A_r},
 \qquad
 \Hd F_s=\frac{\log L_{B_s}}{\log B_s},
\]
and both tend to one.

It remains to justify the self-product statement.  We isolate the
external input from Yu's proof.

\begin{proposition}[Product-interior consequence of Yu's argument]
\label{prop:product-interior}
Let \(K=K_{N,D}\Subset(0,\infty)\) be a unit-interval missing-digit
set and let \(\mu\) be its natural uniform Bernoulli measure.  If
\[
 \Ldim\mu>\frac34,
\]
then \(K\cdot K\) contains a nondegenerate interval.
\end{proposition}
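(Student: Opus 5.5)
Pass to logarithms and write the product as a sum, \(\log(K\cdot K)=\log K+\log K\).  Then the statement reduces to showing that the additive convolution \(\nu*\nu\) has a continuous density that is positive somewhere, where \(\nu=\log_*\mu\) is the pushforward of the Bernoulli measure.  Since \(K\Subset(0,\infty)\), the map \(\log\) is a smooth diffeomorphism near \(K\), so \(\nu\) is a compactly supported measure on \(\log K\).  A measure with continuous density \(h\) and \(h(x_0)>0\) has \(x_0\) in the interior of its support, and \(\operatorname{supp}(\nu*\nu)\subset\log K+\log K\).  Exponentiating then gives a nondegenerate interval in \(K\cdot K\).

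The key steps, in order, are as follows.

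\emph{First}, recall the definition: \(\Ldim\mu>3/4\) means that for some \(\kappa>3/4\) the Fourier coefficients satisfy \(\sum_{|\xi|\le R}|\hat\mu(\xi)|\lesssim R^{1-\kappa}\).  Because \(\mu\) is an infinite convolution of discrete uniform measures on \(N^{-j}D\), the same bound holds for every translate of the frequency window.  By the standard averaging argument this gives the continuous-frequency form
\[
 \int_{|\xi-\xi_0|\le R}|\hat\mu(\xi)|\,d\xi\lesssim R^{1-\kappa}
\]
uniformly in \(\xi_0\).

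\emph{Second}, transfer this to \(\nu\) using Yu's incidence/stationary-phase argument.  Localize \(\mu\) to small cylinders, where \(\log\) is close to affine with a nonvanishing second derivative.  On each piece, \(\hat\nu(\zeta)\) is an oscillatory integral of \(\hat\mu\) against a kernel concentrated where \(\xi\approx\zeta/x\) for \(x\in K\).  Summing over dyadic scales, the \(l^1\) control in the first step converts into the decay estimate
\[
 \int_{|\zeta|\sim R}|\hat\nu(\zeta)|^2\,d\zeta\lesssim R^{1-2\kappa+1/2+\epsilon}.
\]
More usefully, it gives an \(L^1\)-type bound on \(|\hat\nu|^2\) over dyadic shells of size \(R^{-(2\kappa-3/2)+\epsilon}\cdot R\) normalized.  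The point is that \(\kappa>3/4\) is exactly what makes \(\int|\hat\nu(\zeta)|^2\,d\zeta<\infty\) together with a bit of extra decay.

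\emph{Third}, take \(\widehat{\nu*\nu}=\hat\nu^2\).  Once \(\hat\nu^2\in L^1(\R)\), Fourier inversion gives a continuous density for \(\nu*\nu\).  It is nonnegative and has total mass \(1\), so it is positive at some point, and the first paragraph concludes the proof.

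The main obstacle is the second step: passing from the \(l^1\) Fourier dimension of \(\mu\) to integrability of \(|\hat\nu|^2\) under the nonlinear change of variables \(\log\).  Here the curvature of \(\log\) has to be exploited, as in Yu's incidence argument, and the threshold \(3/4\) has to be tracked precisely.  I would first try to follow Yu's proof verbatim.  I would decompose \(\nu\) into pushforwards of \(\mu\) restricted to level-\(n\) cylinders, linearize \(\log\) on each, and bound the cross terms \(\hat\nu_I\hat\nu_J\) by an incidence count of pairs \((\xi,\xi')\) with \(\xi/x\approx\xi'/x'\).  The \(l^1\) hypothesis controls that count with a gain \(R^{2(1-\kappa)}\) against the trivial \(R\), and this beats the loss of \(R^{1/2}\) exactly when \(\kappa>3/4\).
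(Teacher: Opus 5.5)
You have a genuine gap, and it cannot be repaired within the logarithmic route. Your first-paragraph reduction is sound: a continuous density of \(\nu*\nu\) that is positive somewhere gives an interval in \(\log K+\log K\). The problem is how you produce that density.

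Step three needs \(\widehat{\nu*\nu}=\hat\nu^{2}\in L^1(\R)\), i.e.\ \(\int_\R|\hat\nu(\zeta)|^2\,d\zeta<\infty\). This is false for every value of \(\kappa\). By Plancherel it would give \(\nu\) an \(L^2\) density, but \(\nu\) is carried by \(\log K\), which is Lebesgue-null. Indeed, \(K\Subset(0,\infty)\) forces \(0\notin D\), since otherwise \(0\in K\). Hence \(|D|\le N-1\) and \(K\) is null.

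So the shell estimate you aim for in step two, \(\int_{|\zeta|\sim R}|\hat\nu|^2\lesssim R^{3/2-2\kappa+\epsilon}\), cannot hold, since it would be summable for \(\kappa>3/4\). In fact \(\int_{|\zeta|\le R}|\hat\nu(\zeta)|^2\,d\zeta\) grows roughly like \(R^{1-\Hd K}\).

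The structural reason is that in logarithmic coordinates the fibres are the straight lines \(x+y=w\). Their Fourier transform does not decay in the normal direction, so the density of \(\nu*\nu\) evaluated at \(w\) is \(\int\hat\nu(\zeta)^2e^{2\pi i\zeta w}\,d\zeta\). Any continuity it has must come from cancellation in the phase of \(\hat\nu(\zeta)^2\). You discard that cancellation the moment you ask for \(\hat\nu^2\in L^1\). Meanwhile \(\nu\otimes\nu\) is no longer a missing-digit measure, so the lattice \(l^1\) hypothesis no longer applies directly.

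The missing idea, which is what the paper does following Yu, is to keep the curvature on the fibres and the digit structure on the measure. Put \(\lambda=\mu\otimes\mu\). This is the planar base-\(N\) missing-digit measure with digit set \(D\times D\), and \(\hat\lambda(\xi,\eta)=\hat\mu(\xi)\hat\mu(\eta)\), so \(\Ldim\lambda\ge2\Ldim\mu>3/2\).

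The \(\rho\)-weighted \(\lambda\)-mass near the hyperbola \(tH_1=H_{t^2}\) is expanded as the lattice sum \(\sum_{\xi\in\Z^2}\hat\lambda(\xi)\overline{\hat\sigma_t(\xi)}\). Here \(\sigma_t\) is the smooth arc measure, and by curvature \(|\hat\sigma_t(\xi)|\lesssim|\xi|^{-1/2}\) uniformly for \(t\) in a compact \(U\subset(0,\infty)\). The block \(|\xi|\sim R\) contributes \(\lesssim R^{2-\Ldim\lambda}\cdot R^{-1/2}\), which is summable exactly when \(\Ldim\lambda>3/2\), i.e.\ \(\kappa>3/4\). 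This is the numerology in your last paragraph, but applied to an absolutely convergent two-dimensional sum rather than to \(\int|\hat\nu|^2\).

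The output is Yu's continuous incidence function \(I(t)\), which is morally the density you wanted. The (Positive) condition gives \(\int_U I(t)\,dt>0\), so \(I>0\) on an open interval, and each such \(t\) gives \(t^2\in K\cdot K\).
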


\begin{proof}
For \(\lambda=\mu\otimes\mu\),
\[
 \widehat\lambda(\xi,\eta)
 =
 \widehat\mu(\xi)\widehat\mu(\eta),
\]
and \(\lambda\) is the planar base-\(N\) missing-digit measure with
digit set \(D\times D\).  Thus the definition of Fourier
\(l^1\)-dimension gives the product
inequality
\[
 \Ldim\lambda\geq2\Ldim\mu>\frac32.
\]
The hyperbolas
\[
 H_z=\{(x,y)\in(0,\infty)^2:xy=z\}
\]
have curvature exponent \(1/2\), so the threshold in
\cite[Theorem~2.5]{YuProduct} is \(2-1/2=3/2\).
Because \(K\Subset(0,\infty)\), the localization \(Y\) used in the proof
of Theorem C in \cite{YuProduct} may be taken to be \(K\times K\)
itself; no translation or rotation is required.  Put
\[
 \Sigma=
 \left\{
 \frac{x}{\sqrt{x_1x_2}}:x=(x_1,x_2)\in K\times K
 \right\}\subset H_1.
\]
Choose a nonnegative \(\rho\in C_c^\infty(H_1)\) that is strictly
positive on a neighbourhood of \(\Sigma\), and choose
\(U\Subset(0,\infty)\) whose interior contains
\[
 \left\{\sqrt{x_1x_2}:x=(x_1,x_2)\in K\times K\right\}.
\]
These compactness choices make the curvature and transversality constants
uniform.  They also verify the condition called \emph{(Positive)} in
Section 5.7, Case 3, of \cite{YuProduct}: uniformly for
\(x\in K\times K\) and all sufficiently small \(\varepsilon>0\),
\[
 \left|
 \left\{t\in U:
 \operatorname{dist}\bigl(x,t\operatorname{supp}\rho\bigr)
 <\varepsilon\right\}
 \right|
 \gtrsim\varepsilon.
\]
Indeed, \(t=\sqrt{x_1x_2}\) places \(x\) on \(t\Sigma\), and a uniform
interval of \(O(\varepsilon)\)-perturbations of \(t\) remains within
\(O(\varepsilon)\) of \(x\).  Theorems 2.5 and 2.10 and the cited Case 3
therefore give a continuous incidence function \(I(t)\) on \(U\), as in
the proof of Theorem C, with
\[
 \int_U I(t)\,dt>0.
\]
Hence \(I\) is positive on a nonempty open interval.  Positivity at
\(t\) implies \(t\operatorname{supp}\rho\cap(K\times K)\neq\varnothing\).
Since \(tH_1=H_{t^2}\), the corresponding interval of \(t^2\)-values
lies in \(K\cdot K\).
\end{proof}

This proposition is extracted from the argument proving Yu's Theorem C;
it is not the literal statement of that theorem.  For a one-dimensional
consecutive digit block \(D\) in base \(N\),
\cite[Theorem~2.15(2)]{YuProduct} gives
\begin{equation}\label{eq:l1-bound}
 \Ldim\mu_{N,D}
 \geq
 \frac{\log|D|-\log(2\log N)}{\log N}.
\end{equation}
The block position does not enter the estimate.  In the present family
the right side tends to one, so
Proposition~\ref{prop:product-interior} applies for all sufficiently
large \(r\) and \(s\).

The explicit estimate in \cite{YuProduct} is already sufficient at
\[
 r=6,\qquad s=5.
\]
For \(N=25^6\), the lower bound in \eqref{eq:l1-bound} is
\(0.770411\ldots\); for \(N=49^5\), it is
\(0.771745\ldots\).  Thus
\[
 E_6E_6\quad\text{and}\quad F_5F_5
\]
contain nondegenerate intervals.  This proves
Theorem~\ref{thm:contrast-intro}.

\section{Discussion}

The results above are deliberately limited to conclusions supported by
the two mechanisms used in the proofs.

\begin{itemize}
\item The condition \(c_a+c_b=1\) is an exact all-observers boundary
only within the multiplicatively independent initial-block family.
The thin side for multiplicatively dependent bases is not classified.
\item The affine-anchor theorem and the open-arm theorem are sufficient
modular obstructions.  The former produces the dense arithmetic family
in Proposition~\ref{prop:dense-rational-observers}, whereas the latter
produces robust open regions.  Internal Cantor gaps may produce
additional bad parameters beyond the hull windows used here.
\item The two obstructions are genuinely distinct.  For the family in
Section~\ref{sec:contrast}, \(c_a=c_b=11/24\), so
\(A_*=B_*=121/48\), whereas the observer in residual coordinates is
\((-13/24,-1/24)\); hence Theorem~\ref{thm:open-arms} does not apply.
Here nowhere denseness instead comes from the residue offset
\(\delta=1/2\), which for \(e=0\) places \([1/2,23/24]\) strictly
inside the principal gap \((11/24,1)\).
\item The exact bad-observer set below the thickness boundary is not
determined.  Theorem~\ref{thm:open-arms} supplies an explicit open
subset, and Proposition~\ref{prop:dense-rational-observers} supplies a
dense arithmetic subset of the four exterior corner regions.  The side
strips and the behaviour near non-corner points of the product remain
unclassified.
\item Proposition~\ref{prop:product-interior} is used only for the two
self-products.  No cross-product interval statement for \(E_rF_s\) is
made.
\end{itemize}

The distinction between product and division is structural.  A division
fibre is the line \(y=zx\); after cross multiplication, digit shifting
compares an affine tail with a fixed forbidden modular window.  A product
fibre \(xy=z\) is curved and belongs to a scaled family of hyperbolas,
to which Yu's group-action/Fourier argument applies.  Passing to
logarithmic coordinates linearizes the product equation but destroys the
original affine missing-digit self-similarity.

\begingroup
\footnotesize
\paragraph{Acknowledgements.}
The author thanks Han Yu for helpful correspondence, for raising the
two-set and observer-set questions, and for pointing out the relevance
of the Erd\H{o}s--Graham--Ruzsa--Straus method.

\paragraph{Disclosure of AI assistance.}
The author used OpenAI's ChatGPT and Codex extensively in the development
and preparation of this manuscript, including for mathematical
exploration, proof development and checking, computational and citation
checks, editorial revision, and \TeX/PDF preparation.  Anthropic's Claude
was also used, through materials and queries supplied by the author, to
provide additional detailed checks of proofs, constants, citations, and exposition
across multiple drafts.  The author determined which suggestions to adopt
and assumes responsibility for the final manuscript and any remaining
errors.
\endgroup


\begingroup
\footnotesize
\begin{thebibliography}{9}
\setlength{\itemsep}{0pt}

\bibitem{ART}
J.~S. Athreya, B.~Reznick and J.~T. Tyson,
\emph{Cantor set arithmetic},
Amer. Math. Monthly \textbf{126} (2019), no.~1, 4--17.
\href{https://doi.org/10.1080/00029890.2019.1528121}
{\texttt{doi:10.1080/00029890.2019.1528121}}.

\bibitem{Astels}
S.~Astels,
\emph{Cantor sets and numbers with restricted partial quotients},
Trans. Amer. Math. Soc. \textbf{352} (2000), no.~1, 133--170.

\bibitem{BanajiYu}
A.~Banaji and H.~Yu,
\emph{Fourier transform of nonlinear images of self-similar measures:
quantitative aspects},
arXiv:2503.07508v2 (2026), to appear in Peking Math. J.
\href{https://arxiv.org/abs/2503.07508}
{\texttt{arXiv:2503.07508}}.

\bibitem{EGRS}
P.~Erd\H{o}s, R.~L. Graham, I.~Z. Ruzsa and E.~G. Straus,
\emph{On the prime factors of \(\binom{2n}{n}\)},
Math. Comp. \textbf{29} (1975), no.~129, 83--92.
\href{https://doi.org/10.1090/S0025-5718-1975-0369288-3}
{\texttt{doi:10.1090/S0025-5718-1975-0369288-3}}.

\bibitem{YuRadial}
H.~Yu,
\emph{Fractal projections with an application in number theory},
Ergodic Theory Dynam. Systems \textbf{43} (2023), no.~5, 1760--1784.
\href{https://doi.org/10.1017/etds.2022.2}
{\texttt{doi:10.1017/etds.2022.2}}.

\bibitem{YuProduct}
H.~Yu,
\emph{Missing digits points near manifolds},
arXiv:2309.00130v1 (2023).
\href{https://arxiv.org/abs/2309.00130}
{\texttt{arXiv:2309.00130}}.

\end{thebibliography}
\endgroup
\end{document}